\let \ti=\widetilde
\let \ol=\overline
\newtheorem{thm}{Theorem}[section]
\newtheorem{cor}[thm]{Corollary}
\newtheorem{lem}[thm]{Lemma}
\newtheorem{prop}[thm]{Proposition}
\newcommand{\mc}{\mathcal}
\begin{document}
\title[On the tempered L-function conjecture]
{On the tempered L-function conjecture}

\author{Volker Heiermann}
\address{Clermont Universit\'e\\
Universit\'e Blaise-Pascal\\
Laboratoire de math\'ematiques\\
BP 10448\\
F-63000 Clermont-Ferrand\\
email: heiermann@math.univ-bpclermont.fr}
\author{Eric Opdam}
\address{Korteweg de Vries Institute for Mathematics\\
University of Amsterdam\\P.O. Box 94248\\1090 GE Amsterdam\\
The Netherlands\\ email: e.m.opdam@uva.nl}

\date{\today}
\thanks{The first named author has benefitted from help of the Agence
Nationale de la Recherche with reference ANR-08-BLAN-0259-02. The
second named author thanks the laboratoire de math\'ematiques of
the university Blaise-Pascal in Clermont-Ferrand for its
hospitality during the elaboration of this work.} \keywords{}
\subjclass[2000]{Primary 11F66; Secondary 11F70, 22E55}
\begin{abstract}
We give a general proof of Shahidi's tempered $L$-function
conjecture, which has previously been known in all but one case.
One of the consequences is the standard module conjecture for
$p$-adic groups, which means that the Langlands quotient of a
standard module is generic if and only if the standard module is
irreducible and the inducing data generic. We have also included
the result that every generic tempered representation of a
$p$-adic group is a sub-representation of a representation
parabolically induced from a generic supercuspidal representation
with a non-negative real central character.
\end{abstract}
\maketitle
\section{Introduction}
Let $F$ be a non archimedean local field of characteristic $0$.
Let $G$ be the group of points of a quasi-split connected
reductive $F$-group.

By a parabolic subgroup (Borel subgroup, Levi subgroup, torus,
split torus) of G we will mean the group of points of an
$F$-parabolic subgroup ($F$-Borel subgroup, $F$-Levi subgroup,
$F$-torus, $F$-split torus) of the algebraic group underlying G.

Fix a Borel subgroup $B=TU$ of $G$, and let $T_0\subset T$ be the
maximal split torus in $T$. If $M$ is any semi-standard Levi
subgroup of $G$ (i.e. a Levi subgroup which contains $T_0$), a
standard parabolic subgroup of $M$ will be a parabolic subgroup of
$M$ which contains $B \cap M$.

Denote by $W$ the Weyl group of $G$ defined with respect to $T_0$
and by $w_0^G$ the longest element in $W$. By (\cite{Sh3}, section
3) we can fix a non degenerate character $\psi $ of $U$ which, for
every Levi subgroup $M$, is compatible with $w_0^Gw_0^M$. We will
still denote $\psi $ the restriction of $\psi $ to $M\cap U$.
Every generic representation $\pi $ of $M$ becomes generic with
respect to $\psi $ after changing the splitting in $U$.

Let $P=MU$ be a standard parabolic subgroup of $G$ and $T_M$ the
maximal split torus in the center of $M$. We will write $a_M^*$
for the dual of the real Lie-algebra $a_M$ of $T_M$, $a_{M,\mathbb
C}^*$ for its complexification and $a_M^{*+}$ for the positive
Weyl chamber in $a_M^*$ defined with respect to $P$. Following
\cite{W}, we define a map $H_M:M\rightarrow a_M$, such that
$\vert\chi (m)\vert _F=q^{-\langle\chi ,H_M(m)\rangle}$ for every
$F$-rational character $\chi\in a_M^*$ of $M$. If $\pi $ is a
smooth representation of $M$ and $\nu \in a_{M,\mathbb C}^*$, we
denote by $\pi _{\nu }$ the smooth representation of $M$ defined
by $\pi _{\nu }(m)=q^{-\langle\nu ,H_M(m)\rangle}\pi (m)$. (Remark
that, although the sign in the definition of $H_M$ has been
changed compared to the one due to Harish-Chandra, the meaning of
$\pi _{\nu }$ is unchanged.) The symbol $i_P^G$ will denote the
functor of parabolic induction normalized such that it sends
unitary representations to unitary representations, $G$ acting on
its space by right translations.

The parabolic subgroup of $G$ which is opposite to $P$ will be
denoted by $\overline{P}=M\overline{U}$.

Let $(\tau ,E)$ be an irreducible tempered $\psi $-generic
representation of $M$.

Put $\widetilde {w}=w_0^Gw_0^M$ and fix a representative $w$ of
$\widetilde {w}$ as in \cite{Sh3}. Then $w\overline{P}w^{-1}$ is a
standard parabolic subgroup of $G$. For any $\nu\in a_M^*$ there
is a Whittaker functional $\lambda _P(\nu, \tau ,\psi )$ on
$i_P^GV$. It is a linear functional on $i_P^GV$, which is
holomorphic in $\nu $, such that for all $v\in i_P^GV$ and all
$u\in U$ one has $\lambda _P (\nu, \tau ,\psi )((i_P^G\tau _{\nu
})(u)v)=\psi (u)\lambda _{P}(\nu,\tau ,\psi )(v)$. More precisely,
assuming that the space of $\tau $ is formed by Whittaker
functions, one can define $\lambda _P (\nu, \tau ,\psi )$ by (cf.
\cite{Sh1}, proposition 3.1) $$\lambda _P (\nu, \tau ,\psi
)(v)=\int _{U}(v(wu))(1)\ol{\psi (u)}du,$$ where $(v(wu))(1)$
denotes the value in $1$ of the Whittaker function $v(wu)$ in the
space of $\tau _{\nu }$. Remark that by Rodier's theorem
\cite{Ro}, $i_P^G\tau _{\nu }$ has a unique $\psi $-generic
irreducible sub-quotient.

For all $\nu $ in an open subset of $a_M^*$ we have an
intertwining operator $J_{\overline{P}\vert P}(\tau _{\nu
}):i_P^G\tau _{\nu }\rightarrow i_{\overline{P}}^G\tau _{\nu }$.
For $\nu $ in $(a_M^*)^+$ far away from the walls, it is defined
by a convergent integral $$(J_{\overline{P}\vert P}(\tau _{\nu
})v)(g)=\int _{\overline{U}} v(ug) du.$$ It is meromorphic in $\nu
$ and the map $J_{P\vert\overline{P}}J_{\overline{P}\vert P}$ is
scalar. Its inverse equals Harish-Chandra's $\mu $-function up to
a constant and will be denoted $\mu (\tau ,\nu )$.

Let $t(w)$ be the map $i_{\overline{P}}^GV\rightarrow
i_{w\overline{P}}^GwV$, which sends $v$ to $v(w^{-1}\cdot )$.
There is a complex number $C_{\psi }(\nu ,\tau ,w)$ \cite{Sh1}
such that $\lambda _P(\nu ,\tau ,\psi )=C_{\psi }(\nu ,\tau
,w)\lambda _{w\ol{P}}(w\nu ,w\tau ,\psi )t(w)$
$J_{\overline{P}\vert P}(\tau _{\nu })$. The function
$a_M^*\rightarrow\mathbb C$, $\nu\mapsto C_{\psi }(\nu ,\tau ,w)$
is meromorphic.

The local coefficient $C_{\psi }$ satisfies the equality $C_{\psi
}(\cdot ,\tau ,w)C_{\psi }(w(\cdot ),w\tau ,w^{-1})=\mu (\tau, \nu
)$ \cite{Sh1}.

In \cite{Sh3}, F. Shahidi attached to each irreducible component
$r_i$ of the adjoint action of the $L$-group $^LM$ of $M$ on
Lie$(\ ^LU)$, an $L$-function $L(s,\tau ,r_i)$, an $\epsilon
$-factor $\epsilon (s,\tau ,r_i,\psi )$, and a $\gamma $-factor
$\gamma (s,\tau ,r_i,\psi )$, such that $$\gamma (s,\tau ,r_i,\psi
)= \epsilon (s,\tau ,r_i,\psi )L(1-s,\tau ,r_i^{\vee })/L(s,\tau
,r_i).$$ In fact, $L(s,\tau ,r_i)$ equals the reciprocal of the
numerator of $\gamma (s,\tau ,r_i,\psi )$.

He showed that the local coefficient $C_{\psi }$ is equal to the
product of the factors $\gamma (is,\tau ,r_i,\psi )$ with a
holomorphic and non vanishing function (cf. \cite{Sh3}, identity
3.11).

The aim of this paper is to prove the following result:

\begin{thm} The local coefficient $\nu\mapsto C_{\psi }(\nu,\tau,
w)$ is holomorphic in the negative Weyl chamber, i.e. for $\nu\in
-(a_M^{*+})$, and the $L$-functions $L(s,\tau ,r_i)$ are
holomorphic for $s>0$.
\end{thm}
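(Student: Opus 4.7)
The plan is to establish both statements by reducing to the case of generic supercuspidal inducing data and then invoking Heiermann's identification of poles of the $\mu$-function with residual points. The core tool is the identity $C_{\psi}(\nu,\tau,w)C_{\psi}(w\nu,w\tau,w^{-1})=\mu(\tau,\nu)$ combined with Shahidi's factorization of $C_\psi$ as a product of $\gamma$-factors along a reduced expression of $w$.

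The first step is reduction to the case where $\tau$ is generic supercuspidal. By the auxiliary result announced in the abstract, every generic tempered $\tau$ of $M$ embeds as a sub-representation of $i_{P_0}^M\sigma_{\nu_0}$ for a generic supercuspidal $\sigma$ of a Levi $M_0\subset M$ with $\Re\nu_0$ in the closed positive Weyl chamber. The multiplicativity of $C_\psi$ along $P_0\subset P$ (\cite{Sh1}, \S3) and the corresponding multiplicativity of the Langlands--Shahidi $L$-functions transfer both assertions for $\tau$ into assertions for $\sigma$ evaluated at parameters shifted by $\nu_0$; non-negativity of $\Re\nu_0$ ensures that a putative pole on the forbidden side for $\tau$ produces one still on the forbidden side for $\sigma$.

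For $\sigma$ generic supercuspidal, I would argue by contradiction. A pole of $C_\psi(\cdot,\sigma,w)$ at some $\nu_\star\in -a_M^{*+}$, and, for the $L$-function part, a pole of $L(s,\sigma,r_i)$ at some $s_0>0$ (which by Shahidi's factorization \cite{Sh3}, identity 3.11, reflects itself in a special behaviour of $C_\psi$ on a root hyperplane in the positive chamber) can, via the cocycle decomposition of $C_\psi$ along a reduced expression of $w$, be localized to a rank-one factor attached to some root $\alpha$. Combining this localization with the rank-one functional equation then produces a pole of the rank-one $\mu$-function $\mu_\alpha(\sigma,\cdot)$ at a point whose pairing with $\alpha^\vee$ lies on the forbidden side. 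By Heiermann's theorem identifying such poles with $\sigma$-residual points inside $G_\alpha$, this forces a square-integrable generic subquotient of the corresponding rank-one induction $i^{G_\alpha}_{M\cap G_\alpha}\sigma_\bullet$ whose Casselman exponent violates the Casselman criterion.

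The principal obstacle is the rank-one analysis: uniformly for each generic supercuspidal $\sigma$ and each relevant root $\alpha$, one must identify the unique $\psi$-generic subquotient predicted by Rodier's theorem with the square-integrable constituent arising from the residual point, and verify that the genericity propagates correctly through the rank-one parabolic embedding. This relies on the holomorphy of the Whittaker functional $\lambda_P(\nu,\sigma,\psi)$ in $\nu$ and its compatibility with $J_{\overline{P}\vert P}$ via the defining equation of $C_\psi$, together with the Casselman--Shahidi proof of the standard module conjecture in rank one.
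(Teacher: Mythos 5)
Your proposed reduction to the supercuspidal case contains the decisive gap. You claim that multiplicativity of $C_{\psi}$ and of the $L$-functions, together with the non-negativity of $\Re\nu_0$, transfers the holomorphy assertions for $\tau$ to assertions for $\sigma$ "still on the forbidden side". This is false: the dominance of $\nu_0$ (the point the paper calls $\nu_{\tau}$) is only with respect to the roots \emph{inside} $M$, since $\nu_{\tau}\in a_{M_1}^{M*}$ with $\nu_{\tau}\geq_{M\cap P_1}0$; its pairings $\langle\nu_{\tau},\ol{\beta}^{\vee}\rangle$ with coroots of roots $\beta\in\Sigma^+\setminus\Sigma^{M+}$ can be small or negative. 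Consequently, in the product expansion $\gamma(s,\tau,r_i,\psi)=\prod_{\ol{\beta}}\gamma^{M_{1,\beta}}(\sigma\otimes\chi_{\nu_{\tau}+s\widetilde{\alpha}},\cdot,\psi)$ the individual rank-one supercuspidal factors genuinely do acquire poles at $s>0$ (respectively in the negative chamber for $C_{\psi}$): the paper's Proposition 3.1 exhibits them explicitly as zeros of $1-q^{-1/\epsilon_{\ol{\beta}}+\langle\nu_{\tau}+s\widetilde{\alpha},\ol{\beta}^{\vee}\rangle}$, which occur at positive $s$ whenever $\langle\nu_{\tau},\ol{\beta}^{\vee}\rangle<1/\epsilon_{\ol{\beta}}$. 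The known supercuspidal case (Shahidi, Prop.\ 7.3) therefore does not propagate through the shifted parameters; the whole content of the theorem is that such poles are cancelled by zeros of \emph{other} factors in the product, and your argument never produces this cancellation. Your rank-one contradiction via residual points and the Casselman criterion only inspects one factor at a time, so it cannot detect it either; moreover for a single rank-one factor there is nothing to contradict, since its pole in the "forbidden" region is a real phenomenon.

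What is missing is precisely the machinery the paper builds after the (correct) first step of passing to the generic supercuspidal support. The paper uses the main result of [H2] to know that $\nu_{\tau}$ is a residue point for $\mu^{M}$, rewrites $C_{\psi}$ and the $\gamma_i$ as explicit products over $\Sigma_{\sigma}^+\setminus\Sigma_{\sigma}^{M+}$ (Prop.\ 3.1), and then reduces the desired holomorphy/non-vanishing to the counting inequalities $z_n\geq z_p$ and $z_{n,i}\leq z_{p,i}$ in an abstract graded Hecke-algebra setting (Thm.\ 4.1). These inequalities are established by transporting the residue point to an unramified principal series of a split group and quoting Mui\'c--Shahidi [MSh]; this works only under a constraint on the labels $\epsilon_{\ol{\beta}}$ (equal labels, or ratio equal to the ratio of squared root lengths), and the remaining — and substantial — part of the proof (Lemma 5.2 and the case-by-case analysis of Section 6, including Lemma 6.1 on vanishing of certain second $L$-functions) verifies that generic supercuspidal data always satisfy this constraint. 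None of this appears in your proposal, so as it stands the argument does not prove the theorem.
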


\null Remark that the holomorphicity of the local coefficient
$C_{\psi }$ is by the product formula for the local coefficient a
consequence of the holomorphicity of the $L$-functions, although
we will prove both parellel. The holomorphicity of the
$L$-function is known as Shahidi's tempered $L$-function
conjecture. It was originally stated in \cite{Sh3}, conjecture
7.1. It was later proved in all, but one case by different authors
(\cite{CSh}, \cite{MSh}, \cite{KH}, \cite{KW1}, \cite{KK},
\cite{KW2}). The remaining case concerned a group of type $E_8$
and its maximal Levi of type $E_6\times A_1$. If $\tau $ is
supercuspidal, the holomorphicity had already been shown in the
original paper of F. Shahidi \cite[proposition 7.3]{Sh3}.

As a corollary, one gets by \cite{HM} the following result, which
is called the standard modules conjecture:

\begin{cor} Let $\nu\in a_M^{*+}$. Denote by $J(\tau
,\nu )$ the Langlands quotient of the induced representation
$i_P^G\tau _{\nu }$. Then, the representation $J(\tau ,\nu )$ is
generic if and only if $i_P^G\tau _{\nu }$ is irreducible.
\end{cor}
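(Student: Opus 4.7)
The plan is to deduce the corollary from Theorem 1.1 by directly invoking the result of \cite{HM}, which reduces the standard modules conjecture to Shahidi's tempered $L$-function conjecture. Since Theorem 1.1 establishes the latter in full generality, no new analytic work is required; I only sketch how the two conjectures link up so that the logical dependence is transparent.

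The easy implication (irreducibility $\Rightarrow$ genericity) is immediate. If $i_P^G\tau_\nu$ is irreducible it coincides with its Langlands quotient $J(\tau,\nu)$. The Whittaker functional $\lambda_P(\nu,\tau,\psi)$ recalled before the statement of Theorem 1.1 is everywhere holomorphic in $\nu$, and, because $\tau$ is $\psi$-generic, is nonzero; hence $J(\tau,\nu)=i_P^G\tau_\nu$ carries a nonzero Whittaker functional and is $\psi$-generic.

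For the converse, the argument of \cite{HM} proceeds by contradiction. Assume $J(\tau,\nu)$ generic and $i_P^G\tau_\nu$ reducible. Realize $J(\tau,\nu)$ as the image of the long intertwining operator $J_{\overline{P}\vert P}(\tau_\nu)$ composed with $t(w)$; genericity of $J(\tau,\nu)$ forces the composite Whittaker functional $\lambda_{w\ol{P}}(w\nu,w\tau,\psi)\circ t(w)\circ J_{\overline{P}\vert P}(\tau_\nu)$ to be nonzero. The defining identity
\[
\lambda_P(\nu,\tau,\psi)=C_\psi(\nu,\tau,w)\,\lambda_{w\ol{P}}(w\nu,w\tau,\psi)\circ t(w)\circ J_{\overline{P}\vert P}(\tau_\nu),
\]
combined with the product formula $C_\psi(\cdot,\tau,w)C_\psi(w(\cdot),w\tau,w^{-1})=\mu(\tau,\cdot)$ and Silberger's characterization of reducibility in terms of vanishing of the $\mu$-function, then forces $C_\psi(\cdot,\tau,w)$ to have a pole at a point of $-(a_M^{*+})$ (via the substitution $\nu\leftrightarrow w\nu$). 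Theorem 1.1 rules out exactly such a pole, and the contradiction completes the argument.

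The only genuine obstacle lies in Theorem 1.1 itself, whose proof is the substantive content of the paper; once that is granted, the present corollary is a formal consequence of the machinery developed in \cite{HM}. The one point requiring care in the reduction is book-keeping the passage between the positive and negative Weyl chambers, which is handled by the functional equation for $C_\psi$ relating $(\nu,\tau,w)$ to $(w\nu,w\tau,w^{-1})$ and the Shahidi product formula expressing $C_\psi$ in terms of the $\gamma$-factors $\gamma(is,\tau,r_i,\psi)$.
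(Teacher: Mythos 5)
Your proposal is correct and follows essentially the same route as the paper, which offers no independent argument for this corollary but simply observes that it follows from Theorem 1.1 combined with the main result of \cite{HM}. Your additional sketch of the mechanism inside \cite{HM} (the easy direction via the nonvanishing Whittaker functional, and the converse via the local coefficient identity and the product formula) is a reasonable gloss on that reference, but the logical content — tempered $L$-function conjecture plus \cite{HM} yields the standard module conjecture — is exactly the paper's deduction.
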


    The paper is organized as follows: in section 2, we prove a
result which is not needed in the rest of the paper, but which
seems to us interesting in the context. It tells that any generic
irreducible tempered representations of $G$ is a
sub-representation parabolically induced by a supercuspidal
representation of a standard Levi subgroup with non negative
cental character.

    In section 3 the holomorphicity conjectures are reduced to
properties of functions, which can be defined in an affine Hecke
algebra context. The main ingredient here is the description of
the supercuspidal support of discrete series representations of
$p$-adic groups given in \cite{H2}. In section 4, we show that the
holomorphicity property for these functions holds under some
condition on the parameters which appear. We deduce this from the
unramified principal series case for split groups which is proved
in \cite{MSh}. In section 5, we finally prove that the parameters
coming from generic tempered representations of standard Levi
subgroups of $G$ satisfy this condition.

We thank F. Shahidi for some useful conversations and providing
the proof of lemma 6.1.

\section{An embedding property for generic discrete series}

The aim of this section is the proof of the proposition
\ref{prop:cuspidal support generic discrete series}. The proof has
been inspired by the paper \cite{Re}.

\begin{lem}\label{lem:Intertwining} Let $P=MU$ and $P_{\nu }=M_{\nu }U_{\nu }$ be two
standard parabolic subgroups of $G$, $P\subseteq P_{\nu }$. Let
$\sigma $ be a unitary $\psi $-generic supercuspidal
representation of $M$ and $\nu\in a_{M_{\nu }}^{*+}$. Write
$\ti{P_1}=w(P\cap M_{\nu })\ol{U_{\nu }}w^{-1}$ and
$\ti{P}=w\ol{P}w^{-1}$.

The intertwining operator $A_w= t(w^{-1})J_{\ol{\ti{P}} \vert
\ti{P_1}}(w\sigma _{\nu })$ is well defined and $\lambda _P(\nu
,\sigma,\psi )$ $A_w=c\lambda _{\ti{P_1}}(\ti{w}\nu ,w\sigma ,\psi
)$, where $c$ is a non zero constant.
\end{lem}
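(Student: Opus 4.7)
The plan is to treat the asserted identity as an instance of uniqueness of Whittaker functionals. First I would show that $A_w$ is a well-defined $G$-equivariant intertwiner $i_{\ti{P_1}}^G(w\sigma _{\nu })\to i_P^G\sigma _{\nu }$, so that $\lambda _P(\nu ,\sigma ,\psi )\circ A_w$ is a $\psi $-Whittaker functional on $i_{\ti{P_1}}^G(w\sigma _{\nu })$. Since $w\sigma $ is $\psi $-generic (by the compatibility of $\psi $ with $w_0^Gw_0^M$ recalled in the introduction), Rodier's theorem forces the space of such functionals to be one-dimensional and spanned by $\lambda _{\ti{P_1}}(\ti{w}\nu ,w\sigma ,\psi )$, yielding the identity up to a scalar $c=c(\nu ,\sigma )$.

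For well-definedness I would compute that the parabolics $\ol{\ti{P}}$ and $\ti{P_1}$ share the Levi $wMw^{-1}$ and that their unipotent radicals differ precisely in the roots of $wU_{\nu }w^{-1}$. The hypothesis $\nu \in a_{M_{\nu }}^{*+}$ makes the pairing of $\nu $ with those roots strictly positive, so the defining integral of $J_{\ol{\ti{P}}|\ti{P_1}}(w\sigma _{\nu })$ converges, with meromorphic continuation in $\nu $ elsewhere. Since $\ti{P}=w\ol{P}w^{-1}$ forces $\ol{\ti{P}}=wPw^{-1}$, we have $w^{-1}\ol{\ti{P}}w=P$ and $w^{-1}\cdot (w\sigma _{\nu })=\sigma _{\nu }$, so $t(w^{-1})$ is a $G$-equivariant isomorphism from $i_{\ol{\ti{P}}}^G(w\sigma _{\nu })$ onto $i_P^G\sigma _{\nu }$. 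Composing with $J_{\ol{\ti{P}}|\ti{P_1}}(w\sigma _{\nu })$ produces $A_w$ as a $G$-intertwiner.

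The main obstacle is to check that $c\neq 0$. The quickest route is to note that for $\nu $ in an open dense subset of $a_{M_{\nu }}^{*+}$ the meromorphic family $J_{\ol{\ti{P}}|\ti{P_1}}(w\sigma _{\nu })$, hence $A_w$, is an isomorphism, while $\lambda _P(\nu ,\sigma ,\psi )$ is nonzero by definition; therefore $\lambda _P(\nu ,\sigma ,\psi )\circ A_w$ is nonzero at such $\nu $, forcing $c\neq 0$ by meromorphy. A more informative alternative is to substitute the defining identity $\lambda _P(\nu ,\sigma ,\psi )=C_{\psi }(\nu ,\sigma ,w)\,\lambda _{\ti{P}}(\ti{w}\nu ,w\sigma ,\psi )\,t(w)\,J_{\ol{P}|P}(\sigma _{\nu })$ recalled in the introduction and use the conjugation compatibility $t(w)J_{\ol{P}|P}(\sigma _{\nu })t(w^{-1})=J_{\ti{P}|\ol{\ti{P}}}(w\sigma _{\nu })$ to rewrite
$$\lambda _P(\nu ,\sigma ,\psi )\circ A_w=C_{\psi }(\nu ,\sigma ,w)\,\lambda _{\ti{P}}(\ti{w}\nu ,w\sigma ,\psi )\circ J_{\ti{P}|\ol{\ti{P}}}(w\sigma _{\nu })\circ J_{\ol{\ti{P}}|\ti{P_1}}(w\sigma _{\nu }).$$
Multiplicativity of intertwining operators identifies the composition of the two $J$'s with $J_{\ti{P}|\ti{P_1}}(w\sigma _{\nu })$ times a $\mu $-factor (accounting for the backtracking on the roots of $wU_{\nu }w^{-1}$), and in turn $\lambda _{\ti{P}}\circ J_{\ti{P}|\ti{P_1}}(w\sigma _{\nu })$ equals $\lambda _{\ti{P_1}}(\ti{w}\nu ,w\sigma ,\psi )$ times another Shahidi local coefficient. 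Each such factor is meromorphic and generically nonzero in $\nu $, so $c$ is expressed as a finite product of meromorphic nonzero functions and cannot vanish identically. This yields an explicit formula for $c$ that could be useful later in the paper.
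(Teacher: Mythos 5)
Your setup is fine and matches the paper: the convergence of $J_{\ol{\ti{P}}\vert\ti{P_1}}(w\sigma _{\nu })$ from $\langle\ti{w}\nu ,\alpha ^{\vee }\rangle >0$ on the roots where $\ol{\ti{P}}$ and $\ti{P_1}$ differ, the identification $w^{-1}\ol{\ti{P}}w=P$, and the existence of a proportionality factor $c(\nu )$ via uniqueness of Whittaker functionals (this is exactly the paper's ``as in the case of opposite parabolic subgroups'' step). The genuine gap is in the last step: the lemma asserts $c\neq 0$ at the \emph{given} point $\nu\in a_{M_{\nu }}^{*+}$, and in the application (Proposition \ref{prop:cuspidal support generic discrete series}) this point is $\nu _{\tau }$, a residue point of the $\mu $-function -- precisely the kind of special point where intertwining operators fail to be isomorphisms and local coefficients can degenerate. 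Both of your arguments only show that the meromorphic function $\nu\mapsto c(\nu )$ is not identically zero (``nonzero on an open dense set'', ``cannot vanish identically''), which does not exclude $c(\nu )=0$ at the particular $\nu $ of the statement; ``forcing $c\neq 0$ by meromorphy'' is a non sequitur, and in the second route ``each factor is generically nonzero'' leaves open a zero or pole of one of those factors at the specific point.

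What is missing is exactly the pointwise input the paper uses: since $\sigma $ is supercuspidal and $\ti{w}\nu $ lies in the negative Weyl chamber of $a_{wM_{\nu }w^{-1}}^*$ relative to $\ol{P}_{\nu }$, the product formula for the $C$-function together with the supercuspidal case of Theorem 1.1 (Shahidi, \cite[proposition 7.3]{Sh3}) shows that the local coefficient $C_{\psi }(\cdot ,w\sigma )$ relating $\lambda _{\ti{P_1}}$ to $\lambda _P\circ A_w$ is holomorphic at $\ti{w}\nu $, and the fact that in the supercuspidal case the zeros of the local coefficient lie on the unitary axis shows it does not vanish there (as $\nu \neq 0$). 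These two facts give, respectively, $c\neq 0$ and $c$ finite at the given $\nu $. Your second route is close in spirit to this (it is the same product formula), but without invoking the supercuspidal holomorphy in the negative chamber and the location of the zeros you cannot control the value of the product at the specific $\nu $, so the proof as written does not establish the lemma in the form in which it is actually used.
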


\begin{proof} The intertwining operator $A_w$ is well defined, because
any root $\alpha $ which is positive for $\ol{\ti{P}}$ and
negative for $\ti{P_1}$ verifies $\langle\ti{w}\nu ,\alpha ^{\vee
}\rangle >0$. One shows as in the case of opposite parabolic
subgroups that there is a meromorphic function $C_{\psi }(\nu
',w\sigma )$ depending on $\nu'\in a_{wM_{\nu }w^{-1}}^*$ such
that $\lambda _{\ti{P}_1}(\ti{w}\nu ',w\sigma ,\psi )=C_{\psi
}(\ti{w}\nu ',w\sigma )\lambda_{\ti{P}}(\nu ',\sigma ,\psi )$
$t(w^{-1})J_{\ol{\ti{P}} \vert\ti{P_1}}(w\sigma _{\nu '}).$ As the
intertwining operator depends effectively on a representation
induced from $M_{\nu }$ and $\ti{w}\nu $ is in the negative Weyl
chamber of $a_{wM_{\nu }w^{-1}}^*$ with respect to $w^{-1}M_{\nu
}w\ti{P}_1=\ol{P}_{\nu }$, it follows from the product formular
for the C-function and the fact that theorem {\bf 1.1} is known in
the supercuspidal case, that $C(\cdot ,w\sigma ,\psi )$ is
holomorphic in $\ti{w}\nu $. As in the supercuspidal case the
zeroes of the local coefficient $C_{\psi }$ lie on the unitary
axis, this proves the lemma.
\end{proof}

The following result is due to W. Casselman \cite{Ca}, proposition
4.1.4 and 4.1.6:

\begin{prop} Let $(\pi ,V)$ be an admissible representation of $G$,
$P_1=M_1U_1$ a semi-standard parabolic subgroup and $H$ an open
compact subgroup of Iwahori type with respect to $(P_1,M_1)$,
which means that $H=(H\cap U_1)(H\cap M)(H\cap \ol{U}_1)$.

Then there is an open compact subgroup $U_1'$ of $U_1$ such that
$V^H\cap V(U_1)\subseteq V(U_1')$. The spaces $(V^H)_a:=\pi
(1_{HaH})V$ with $a\in T_{M_1}$ positive for $P_1$ and such that
$aU_1'a^{-1}\subseteq H\cap U$ are all equal to the same space,
denoted $S_{P_1}^H(V)$. The Jacquet function $j_{P_1}^G$ induces
an isomorphism $S_{P_1}^H(V)\rightarrow (V)_{P_1}^{H\cap M_1}$.
\end{prop}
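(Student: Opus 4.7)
The plan is to prove the three assertions in turn, relying on the admissibility of $\pi$ (so $V^H$ is finite-dimensional) together with the Iwahori factorization $H=(H\cap U_1)(H\cap M_1)(H\cap\ol{U}_1)$ and the semigroup properties of elements $a\in T_{M_1}$ positive for $P_1$.

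For the first assertion, I would use the characterization that $v\in V(U_1)$ if and only if $\int_{U''}\pi(u)v\,du=0$ for some open compact $U''\subseteq U_1$: one direction is immediate, and the other holds because $V/V(U_1)$ is the $U_1$-coinvariant quotient. Since $V^H\cap V(U_1)$ is finite-dimensional by admissibility, enlarging $U''$ to handle each element of a finite basis produces a single $U_1'$ that works simultaneously for the whole intersection.

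For the stabilization statement, bi-$H$-invariance of $1_{HaH}$ first gives $\pi(1_{HaH})V\subseteq V^H$. For two positive elements $a,a'$ with $aU_1'a^{-1}\subseteq H\cap U_1$ (and similarly for $a'$), the Iwahori factorization allows one to decompose the double coset as $HaH=(H\cap U_1)\,a\,H$ and derive a convolution identity $\pi(1_{HaH})\pi(1_{Ha'H})=c\,\pi(1_{Haa'H})$ on $V^H$ with positive constant $c$. This forces the spaces $\pi(1_{HaH})V$ to coincide for all such $a$; they are the image on $V^H$ of a common (normalized) idempotent, which one names $S_{P_1}^H(V)$.

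For the Jacquet isomorphism, injectivity on $S_{P_1}^H(V)$ uses the first assertion: if $v=\pi(1_{HaH})v_0$ happens to lie in $V(U_1)$, then $\int_{U_1'}\pi(u)v\,du=0$, whereas the containment $aU_1'a^{-1}\subseteq H\cap U_1$ combined with $H$-fixedness of $v$ reduces this integral to a positive multiple of $v$ itself, forcing $v=0$. Surjectivity is the main obstacle and carries the genuine content: given $\ol{w}\in (V_{P_1})^{H\cap M_1}$, one lifts it to some $w\in V^{H'}$ for a sufficiently small compact open $H'\subseteq H$ of Iwahori type, and then applies $\pi(1_{HaH})$ for $a$ sufficiently positive; the crucial point, computed again through the Iwahori decomposition of $HaH$, is that the image of $\pi(1_{HaH})w$ in $V_{P_1}$ is a positive multiple of the $a$-translate of $\ol{w}$. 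Since $a\in M_1$ acts invertibly on the Jacquet module, one recovers all of $(V_{P_1})^{H\cap M_1}$ in the image. The technical heart of the argument is matching the Iwahori decomposition of $HaH$ against the Haar measure so that this last calculation lands precisely on a controlled multiple of the $a$-translate.
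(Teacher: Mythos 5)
The paper does not prove this proposition at all: it is quoted from Casselman's notes (Propositions 4.1.4 and 4.1.6), whose argument is essentially the one you are trying to reconstruct. Your first assertion (existence of $U_1'$ via admissibility and the characterization of $V(U_1)$ by vanishing of averages) is fine, and your overall strategy — the decomposition $HaH=(H\cap U_1)aH$ for positive $a$, the convolution identity $1_{HaH}*1_{Ha'H}=c\,1_{Haa'H}$, and finite-dimensionality of $V^H$ — is the right one. But two of the three remaining steps contain genuine errors. For injectivity, your claim that $\int_{U_1'}\pi(u)v\,du$ is a positive multiple of $v$ because $v$ is $H$-fixed is false: $v$ is fixed only by $H\cap U_1$, and $U_1'$ is in general much larger than $H\cap U_1$ (if one could take $U_1'\subseteq H\cap U_1$, the element $a$ would be superfluous). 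The hypothesis $aU_1'a^{-1}\subseteq H\cap U_1$ is used in the opposite direction: for $v\in V^H\cap V(U_1)\subseteq V(U_1')$ one gets $\pi(1_{HaH})v=c\,\pi(a)\int_{a^{-1}(H\cap U_1)a}\pi(u)v\,du=0$ since $a^{-1}(H\cap U_1)a\supseteq U_1'$; this only shows $v\in\ker\pi(1_{HaH})$, and to conclude $v=0$ on the image you still need something more, e.g. the identity $j_{P_1}^G(\pi(1_{HaH})v_0)=c\,\delta^{1/2}(a)\,\bar\pi(a)\,j_{P_1}^G(v_0)$ for $v_0\in V^H$, which identifies $\ker(\pi(1_{HaH})|_{V^H})$ with $V^H\cap V(U_1)$ and yields injectivity of $j_{P_1}^G$ on the image directly. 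The same identity is what makes the stabilization work: your phrase ``image of a common normalized idempotent'' is unjustified (the normalized operators are not idempotents, since $1_{HaH}*1_{HaH}=c\,1_{Ha^2H}$); one needs the constancy of the kernel, hence of the rank, together with the convolution inclusions $\pi(1_{Haa'H})V\subseteq\pi(1_{HaH})V\cap\pi(1_{Ha'H})V$, to force equality of the images.

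The surjectivity step also fails as set up. If $w$ is only $H'$-fixed, then $\pi(1_{HaH})w=c\sum_x\pi(xa)\pi(e_H)w$, so the average over $H\cap\ol{U}_1$ hidden in $e_H$ hits $w$ \emph{before} the translation by $a$; the image in $V_{P_1}$ is therefore $c\,\bar\pi(a)\,\bar e_{H\cap M_1}\,j_{P_1}^G(e_{H\cap\ol{U}_1}w)$, which is not a controlled multiple of $\bar\pi(a)\ol{w}$, and taking $a$ more positive does nothing to repair this. The standard fix (Casselman's) is to first adjust the lift: average over $H\cap M_1$ (harmless since $\ol{w}$ is $H\cap M_1$-invariant) and over $H\cap U_1$ (does not change the Jacquet image), so that $w$ is $(H\cap U_1)(H\cap M_1)$-fixed and fixed by some small $\ol{U}''\subseteq\ol{U}_1$; then apply $\pi(e_H)\pi(a)$ with $a$ positive enough that $a^{-1}(H\cap\ol{U}_1)a\subseteq\ol{U}''$, so that $e_{H\cap\ol{U}_1}\pi(a)w=\pi(a)w$ and the Jacquet image is exactly $c\,\bar\pi(a)\ol{w}$; invertibility of $\bar\pi(a)$ on $(V_{P_1})^{H\cap M_1}$ then gives surjectivity. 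In short, your outline identifies the right ingredients, but the order in which the translation by $a$ and the averagings over $U_1'$, $H\cap U_1$ and $H\cap\ol{U}_1$ are applied is precisely where the content lies, and as written both the injectivity and surjectivity computations get that order wrong.
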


\begin{lem} (with the assumptions and notations of proposition
{\bf 2.2}) If $(\pi ', V')$ is a sub-representation of $(\pi ,V)$,
then one has $S_{P_1}^H(V)\cap V'=S_{P_1}^H(V')$.
\end{lem}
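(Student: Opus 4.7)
The plan is to use both descriptions of $S_{P_1}^H(W)$ supplied by Proposition 2.2: the concrete realization $S_{P_1}^H(W)=\pi(1_{HaH})W$ and the Jacquet isomorphism $j_{P_1}^G\colon S_{P_1}^H(W)\isom W_{P_1}^{H\cap M_1}$. The inclusion $S_{P_1}^H(V')\subseteq S_{P_1}^H(V)\cap V'$ is immediate from the first description: $\pi(1_{HaH})V'\subseteq V'$ by $G$-stability of $V'$, and $\pi(1_{HaH})V'\subseteq\pi(1_{HaH})V=S_{P_1}^H(V)$.

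For the reverse inclusion I would pick $v\in S_{P_1}^H(V)\cap V'$ and use the Jacquet description, applied simultaneously to $V$ and to the sub-representation $V'$. Since $V'\hookrightarrow V$ induces, by exactness of the Jacquet functor, an injection $(V')_{P_1}\hookrightarrow V_{P_1}$, and $v\in V'$ is $H$-fixed, the vector $j_{P_1}^G(v)$ sits in the image of $(V')_{P_1}^{H\cap M_1}$ inside $V_{P_1}^{H\cap M_1}$. Applying Proposition 2.2 to $V'$, let $v'\in S_{P_1}^H(V')$ be the preimage of $j_{P_1}^G(v)$ under the isomorphism $j_{P_1}^G\colon S_{P_1}^H(V')\isom (V')_{P_1}^{H\cap M_1}$. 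By the easy inclusion just proved, $v'$ also lies in $S_{P_1}^H(V)$; since $v-v'\in S_{P_1}^H(V)$ is killed by $j_{P_1}^G$, the injectivity part of the isomorphism in Proposition 2.2 for $V$ forces $v=v'\in S_{P_1}^H(V')$.

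The only delicate point is the compatibility of the two Jacquet functors with the inclusion $V'\hookrightarrow V$, namely that $j_{P_1}^G(v)$ computed in $(V')_{P_1}$ maps to $j_{P_1}^G(v)$ computed in $V_{P_1}$ under the natural injection. This is just exactness of the Jacquet functor and is standard, so I do not expect a real obstacle. An alternative route would be to apply the Casselman direct sum decomposition $W^H=S_{P_1}^H(W)\oplus(W^H\cap W(U_1))$ to $W=V$ and to $W=V'$, and argue by uniqueness of the decomposition; this is essentially the same argument repackaged without reference to the Jacquet module.
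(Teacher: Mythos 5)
Your proposal is correct and follows essentially the same route as the paper: the easy inclusion from the realization $S_{P_1}^H(\cdot)=\pi(1_{HaH})(\cdot)$, then producing $v'\in S_{P_1}^H(V')$ with the same Jacquet image as $v$ via Proposition 2.2 applied to $V'$, and concluding $v=v'$ from the injectivity of $j_{P_1}^G$ on $S_{P_1}^H(V)$. The compatibility of the Jacquet modules with the inclusion $V'\hookrightarrow V$, which you flag as the only delicate point, is indeed standard (exactness of the Jacquet functor) and is used implicitly in the paper's proof as well.
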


\begin{proof}
By definition, it is clear that $S_{P_1}^H(V')\subseteq
S_{P_1}^H(V)\cap V'$. On the other hand, if $v$ is an element of
$S_{P_1}^H(V)\cap V'$, then there is by proposition {\bf 2.2} an
element $v'$ in $S_{P_1}^H(V')$ such that
$j_{P_1}^Gv=j_{P_1}^Gv'$. As $S_{P_1}^H(V')\subseteq
S_{P_1}^H(V)$, it follows from proposition {\bf 2.2} that $v=v'$.
\end{proof}

\begin{lem} Let $P_1=MU_1$ be a semi-standard parabolic subgroup
with Levi factor $M$ and denote by $\ti{P}_1$ the semi-standard
parabolic subgroup which is conjugated by $w$ to $\ol{P}_1$. Let
$(\sigma ,E)$ be an admissible representation of $M$, let $H$ be
an open compact subgroup of $G$ of Iwahori type with respect to
$P_1$, such that there is a nonzero element $e$ in $E^{H\cap M}$.
Then there is a well defined element $v$ in $(i_{\ti{P}_1}^GwE)^H$
with support in $\ti{P}_1wH$ such that $v(w)=e$. It lies in
$S_{P_1}^H(i_{\ti{P_1}}^GwE)$.
\end{lem}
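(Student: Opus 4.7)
My plan is to define $v$ by the unique formula dictated by the induced-module structure on the open Bruhat double coset $\ti{P}_1 w H$, and then verify in turn well-definedness, $H$-invariance, and membership in $S_{P_1}^H$. Explicitly, set
\[
v(pwh) = \delta_{\ti{P}_1}^{1/2}(p)\,(w\sigma)(\bar{p})\,e
\]
for $p \in \ti{P}_1$, $h \in H$, where $\bar{p}$ denotes the Levi projection of $p$ onto $wMw^{-1}$, and $v \equiv 0$ outside $\ti{P}_1 w H$. The openness of $\ti{P}_1 w H$ in $G$ follows from $\ti{P}_1 w P_1 = w\ol{P}_1 P_1$ being a translate of the big Bruhat cell.

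Well-definedness reduces to checking that for $x \in \ti{P}_1 \cap wHw^{-1}$ one has $\delta_{\ti{P}_1}^{1/2}(x)(w\sigma)(\bar{x})\,e = e$. The Iwahori factorization $H = (H\cap U_1)(H\cap M)(H\cap \ol{U}_1)$ yields $H \cap \ol{P}_1 = (H\cap M)(H\cap \ol{U}_1)$, so $\ti{P}_1 \cap wHw^{-1} = w(H\cap M)w^{-1}\cdot w(H\cap \ol{U}_1)w^{-1}$. The Levi part $\bar{x}$ therefore lies in $w(H\cap M)w^{-1}$, on which $w\sigma$ acts as $\sigma$ restricted to $H\cap M$, fixing $e$. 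Since $x$ is compact, $\delta_{\ti{P}_1}^{1/2}(x)=1$. Right $H$-invariance of $v$ is then immediate from the fact that the defining formula depends only on the $\ti{P}_1$-factor.

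For the claim $v \in S_{P_1}^H(V)$ with $V = i_{\ti{P}_1}^G wE$, I appeal to proposition \textbf{2.2}: for $a \in T_M$ sufficiently positive for $P_1$ (with $aU_1'a^{-1}\subseteq H\cap U_1$), $S_{P_1}^H(V) = \pi(1_{HaH}) V$. Since $v \in V^H$, it suffices to show that $\pi(1_{HaH}) v$ is a nonzero scalar multiple of $v$. Writing $\pi(1_{HaH}) v$ via left averaging over $H$ applied to $\pi(a) v$, the positivity of $a$ (which contracts conjugation on $U_1$ and expands it on $\ol{U}_1$ in the opposite way) ensures that the translates involved land back in $\ti{P}_1 w H$, and collapsing the integrand on $H\cap M \cdot (H\cap \ol{U}_1)$ yields a positive multiple of $v$. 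Equivalently, one may verify by the Bernstein--Zelevinsky geometric lemma that $j_{P_1}^G v$ is nonzero in the top graded piece of the Jacquet filtration (corresponding to the open orbit $\ti{P}_1 w P_1$, whose stabilizer in $P_1$ is $M$), with image proportional to $e$; combined with the isomorphism $j_{P_1}^G: S_{P_1}^H(V) \isom V_{P_1}^{H\cap M}$ from proposition \textbf{2.2} and the decomposition $V^H = S_{P_1}^H(V) \oplus (V^H \cap V(U_1))$, membership follows.

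The main obstacle is this last step. The conceptual picture (a vector supported on the open Bruhat cell contributes to the top of the Jacquet filtration) is classical, but the explicit compatibility with Casselman's projection $\pi(1_{HaH})$ demands careful bookkeeping of the Iwahori decomposition of $H$, the positivity of $a$ relative to both $P_1$ and $\ti{P}_1$, and the modulus character $\delta_{\ti{P}_1}^{1/2}$; nothing is subtle once $a$ and the small support of $v$ are chosen compatibly, but the explicit verification is delicate.
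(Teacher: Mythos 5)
Your construction of $v$ and the verification of its well-definedness (via $H\cap \ol{P}_1=(H\cap M)(H\cap \ol{U}_1)$ and the triviality of the modulus on compact subgroups) is fine. The gap is in the only nontrivial part of the lemma, the membership $v\in S_{P_1}^H(i_{\ti{P}_1}^GwE)$. Your key claim that $\pi(1_{HaH})v$ is a nonzero scalar multiple of $v$ is false in the stated generality. Carrying out the coset computation with $HaH=\bigsqcup_i u_iaH$, $u_i\in (H\cap U_1)/a(H\cap U_1)a^{-1}$, one finds that only the trivial coset contributes at $w$, so that $\pi(1_{HaH})v$ is (a nonzero constant times) the $H$-invariant function supported on $\ti{P}_1wH$ whose value at $w$ is $\sigma(a)e$, not $e$. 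Since $(\sigma,E)$ is only assumed admissible, $\sigma(a)e$ need not be proportional to $e$ (take $\sigma$ a direct sum of two characters of $M$ differing on $T_M$); so $\pi(1_{HaH})v\neq cv$ in general, and your main route breaks. (In the application in Proposition 2.5 the representation $\sigma$ is irreducible supercuspidal, so $\sigma(a)$ is scalar and your identity happens to hold there, but the lemma is stated — and used — as a statement about arbitrary admissible $\sigma$, and in any case the "collapsing the integrand" step is exactly the delicate computation that needs to be done, not assumed.) Your fallback argument is also a non sequitur: knowing that $j_{P_1}^Gv\neq 0$ (with image proportional to $e$) only shows $v\notin V^H\cap V(U_1)$; in the decomposition $V^H=S_{P_1}^H(V)\oplus (V^H\cap V(U_1))$ the vector $v$ could still have a nonzero component in $V^H\cap V(U_1)$, so nonvanishing of the Jacquet image does not give membership in $S_{P_1}^H(V)$.

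The repair is exactly the paper's argument, which you almost have: instead of applying $\pi(1_{HaH})$ to $v$ itself, apply it to an auxiliary vector built from $\sigma(a^{-1})e$ (the paper takes $\ti{v}$ supported on $\ti{P}_1w(a^{-1}Ha)$ with $\ti{v}(w)=\sigma(a^{-1})e$, using that $a^{-1}Ha$ is again of Iwahori type and $\sigma(a^{-1})e$ is fixed by $a^{-1}(H\cap M)a=H\cap M$). The same coset computation (this is the computation of lemma 5.1 of \cite{H1}, which the paper invokes) shows that $(i_{\ti{P}_1}^Gw\sigma)(1_{HaH})\ti{v}$ is, up to a nonzero constant, precisely the function supported on $\ti{P}_1wH$ with value $e$ at $w$; the twist by $\sigma(a^{-1})$ compensates the $\sigma(a)$ produced by the Hecke operator. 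With this arrangement the membership in $S_{P_1}^H(V)=\pi(1_{HaH})V$ is automatic from the definition in Proposition 2.2, and no separate argument is needed.
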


\begin{proof}
Choose an element $a\in T_M$ which satisfies the assumptions of
the proposition relative to $P_1$ and $i_{\ti{P_1}}^GwE$. One
observes that $\sigma (a^{-1})e$ lies in $E^{a^{-1}(H\cap M)a}$.
There is a well defined element $\ti{v}$ in $(i_{\ti{P}_1}^GwE)^H$
with support contained in $\ti{P}_1w(a^{-1}Ha)$ verifying
$\ti{v}(w)=\sigma (a^{-1})e$: this follows easily from the fact
that $a^{-1}Ha$ is also of Iwahori type relative to $P_1$ and
consequently $\ti{P}_1 w(a^{-1}Ha)=\ti{P}_1 wa^{-1}(H\cap U_1)a$.
A computation analog to the one in the proof of lemma {\bf 5.1} in
\cite{H1} gives then that $(i_{\ti{P}_1}^Gw\sigma
)(1_{HaH})\ti{v}$, multiplied by a convenient nonzero constant,
has the desired properties.
\end{proof}

\begin{prop}\label{prop:cuspidal support generic discrete series}
Let $\pi $ be a $\psi $-generic irreducible discrete
series representation of $G$. There exists a standard parabolic
subgroup $P=MU$ of $G$, a unitary $\psi $-generic supercuspidal
representation $(\sigma ,E)$ of $M$ and $\nu\in
\overline{a_M^{*+}}$, such that $\pi $ is a sub-representation of
$i_P^G\sigma_{\nu }$.
\end{prop}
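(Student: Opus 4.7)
The plan is to take $(M,\sigma_\nu)$ to be the supercuspidal support of $\pi$, normalized so that $M$ is a standard Levi and $\nu \in \overline{a_M^{*+}}$. This normalization is always achievable by replacing $(M,\sigma_\nu)$ by a $W$-conjugate. Two independent statements then have to be proved: the $\psi$-genericity of $\sigma$, and the embedding $\pi \hookrightarrow i_P^G \sigma_\nu$.

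The $\psi$-genericity of $\sigma$ is the easier half. Since cuspidal support theory already shows $\pi$ is an irreducible subquotient of $i_P^G \sigma_\nu$, Rodier's theorem \cite{Ro} forces $\pi$ to be the unique $\psi$-generic irreducible subquotient of $i_P^G\sigma_\nu$. In particular $i_P^G\sigma_\nu$ itself is $\psi$-generic, and by Rodier's theorem in the other direction (genericity of an induced representation is equivalent to genericity of the inducing datum), $\sigma$ must be $\psi$-generic.

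For the embedding I would argue via adjunction: to produce $\pi \hookrightarrow i_P^G \sigma_\nu$ it suffices to produce a non-zero $M$-homomorphism $\pi_P \twoheadrightarrow \sigma_\nu$ out of the Jacquet module of $\pi$. By cuspidal support theory $\sigma_\nu$ is at least a composition factor of $\pi_P$, and the point is to put it on the quotient side. Here the discrete series hypothesis, via Casselman's square-integrability criterion, forces every exponent of $\pi_P$ to lie in $a_M^{*+}$; combined with the choice of $\nu$ in $\overline{a_M^{*+}}$, this makes $\nu$ maximal in its Weyl-orbit for the natural dominance order, which is precisely the condition that forces $\sigma_\nu$ to appear as a quotient rather than only as a sub-object. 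To make this concrete I would use the explicit machinery of this section: pick $H$ a small enough open compact subgroup of Iwahori type with respect to some $P_1 \subseteq P$ so that $\pi^H \neq 0$ supports a non-zero Whittaker vector and $E^{H\cap M}\neq 0$. Lemma 2.4 then produces an explicit element $v \in S_{P_1}^H(i_{\tilde P_1}^G wE)$ with $v(w)=e$; Lemma 2.3 ensures $S_{P_1}^H$ descends correctly to sub-representations; and Lemma 2.1 relates the Whittaker functional on $i_{\tilde P_1}^G w\sigma_\nu$ evaluated at $v$ to that on $i_P^G\sigma_\nu$ evaluated at $A_w^{-1}v$, producing a non-zero Whittaker pairing which, after passing through $j_P^G$, detects a copy of $\pi$ inside $i_P^G\sigma_\nu$ as a subrepresentation.

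The principal obstacle I expect is precisely the quotient-versus-subquotient issue: a priori cuspidal support theory only identifies $\sigma_\nu$ as a composition factor of $\pi_P$, and it is the combination of the Casselman-type positivity (from $\pi$ being discrete series) with the uniqueness of the Whittaker functional (from $\pi$ being $\psi$-generic) that pins $\sigma_\nu$ on top of $\pi_P$. This is exactly the point at which the Reeder-inspired strategy mentioned at the start of the section enters the argument.
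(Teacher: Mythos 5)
There is a genuine gap, and it sits exactly at the point you flag as "the principal obstacle". First, Casselman's square-integrability criterion does not place the exponents of $j_P^G\pi$ in the chamber $a_M^{*+}$: it places their real parts in the open cone spanned by the simple roots (positive pairings with the fundamental coweights), which is a strictly larger cone than the dominant one, so the positivity you invoke is not what the criterion gives. More importantly, even granting that $\nu\in\overline{a_M^{*+}}$ is maximal in its Weyl orbit for the dominance order, you give no argument that this forces $\sigma_\nu$ to occur in the cosocle of $j_P^G\pi$ rather than merely as a subquotient. The standard argument of this kind (multiplicity one of the exponent $\nu$ in $j_P^Gi_P^G\sigma_\nu$ via the geometric lemma, plus the decomposition under the generalized central character) only works when $\nu$ is regular; the residue points carrying discrete series typically lie on walls, and the non-regular case is precisely the difficulty. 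Note also that genericity must enter the embedding argument itself: for the trivial representation of $SL_2$ the cuspidal support can be written with dominant $\nu=\rho$, yet that exponent does not occur in the Jacquet module at all, so a purely "dominance forces quotient" mechanism cannot be correct; your sketch uses genericity only to identify $\pi$ afterwards.

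Your "concrete" version gestures at Lemmas 2.1, 2.3, 2.4, but omits the step that actually carries the proof. The Whittaker functional $\lambda_P(\nu,\sigma,\psi)$ is nonzero on the whole space $i_P^GE_\nu$, so its non-vanishing at the vector $A_wv_0$ (note: one evaluates $\lambda_P$ at $A_wv_0$, not at $A_w^{-1}v$) proves nothing by itself; the content is that $A_wv_0$ lies in the subspace $(i_P^GE_\nu)_0$ spanned by the irreducible subrepresentations, so that some irreducible subrepresentation is $\psi$-generic and hence equals $\pi$. Establishing this is where the paper works: one replaces $P$ by the auxiliary data $M_\nu$, $P_\nu$, $P_1=\overline{P\cap M_\nu}\,U_\nu$ (so that $\nu$ becomes regular in $a_{M_\nu}^*$), decomposes $i_{P\cap M_\nu}^{M_\nu}\sigma$ into tempered summands $\tau_i$, shows by a central-character computation that $j_{P_1}^GA_wv_0$ lies in the generalized eigenspace $(j_{P_1}^Gi_P^GE_\nu)_\nu^\infty$, and then proves by a length count (geometric lemma, Frobenius reciprocity, temperedness of the $\tau_i$) that the irreducible subrepresentations are the only subquotients contributing exponents with real part $\nu$ to $j_{P_1}^G$; Casselman's $S_{P_1}^H$ machinery (Proposition 2.2 and Lemma 2.3) then transfers this Jacquet-module statement back to the vector $A_wv_0$ itself. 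None of this is present or replaced by an alternative argument in your proposal, so the quotient-versus-subquotient issue you correctly identify remains unresolved.
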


\begin{proof}
It follows from results of \cite{Ro} that there exist $P=MU$,
$\sigma $ and $\nu $ as in the statement such that $\pi $ is a
sub-quotient of $i_P^G\sigma _{\nu }$.  In addition, $\pi $ is the
only irreducible $\psi $-generic sub-quotient of $i_P^G\sigma
_{\nu }$. From this one sees, that it is enough to show that there
is an irreducible sub-space of $i_P^G\sigma _{\nu }$, on which the
Whittaker functional $\lambda _P(\nu ,\sigma,\psi )$ does not
vanish.

Denote by $\Sigma (P)$ the set of reduced roots of $T_M$ in
$Lie(U)$, by $\Sigma _{\nu }$ the subset of roots $\alpha $ such
that $\langle\nu ,\alpha^{\vee }\rangle =0$ and by $M_{\nu }$ the
semi-standard Levi subgroup of $G$ containing $M$ obtained by
adjoining the roots in $\Sigma _{\nu }$ to $M$.

One has $\nu\in a_{M_{\nu }}^*$ and there is a parabolic subgroup
$P_{\nu }=M_{\nu }U_{\nu }$ such that $\nu $ lies in the positive
Weyl chamber of $a_{M_{\nu }}^*$ with respect to this parabolic
subgroup. The parabolic $P_{\nu }$ may not be standard, but
$P_{\nu }$ is conjugated in $G$ to a standard parabolic subgroup.
By conjugation $\sigma $ and $\nu $ in the same manner and
conjugating then $\sigma $ and $M$ inside $M_{\nu  }$, so that $M$
becomes the Levi factor of a standard parabolic subgroup $P$, one
can finally assume $P_{\nu }$ standard and $\nu\in a_{M_{\nu
}}^{*+}$.

One can then write $i_P^G\sigma_{\nu }=i_{P_{\nu }}^G(i_{P\cap
M_{\nu }}^{M_{\nu }}\sigma)_{\nu }$. The representation $\tau
=i_{P\cap M_{\lambda }}^{M_{\lambda }}\sigma $ is a direct sum of
irreducible tempered representations $(\tau _i,E_i)$. (Some of
them may be isomorphic).

Write $\widetilde{P}$ for the standard parabolic subgroup which is
conjugated to $\ol{P}$ by $w$. Put $P_1=\ol{P\cap M_{\nu }}\
U_{\nu }$ and denote by $\ti{P_1}$ the parabolic subgroup of $G$
which is conjugated to $\ol{P_1}$ by $w$.

Denote by $\mc{F}_{\widetilde{P_1}wP_1}$ the subspace of
$i_{\widetilde{P_1}}^GwE$ formed by the functions with support in
the open set $\widetilde{P_1}wP_1$. It follows from the geometric
lemma that the Jacquet functor $j_{P_1}^G$ sends
$\mc{F}_{\widetilde{P_1}wP_1}$ to a subspace of
$j_{P_1}^Gi_{\widetilde{P_1}}^G wE$ on which $M$ acts by the
representation $\sigma _{\nu }$.

Choose a Whittaker function $e$ in the space of $\sigma $ with
nonzero value in $1$ and an open compact subgroup $H$ of $G$ of
Iwahori type with respect to $(P_1,M)$, such that $e$ is $H\cap
M$-invariant. By the lemma {\bf 2.4}, there is an element $v_0$ in
$S_{P_1}^H(i_{\ti{P_1}}^GwE)$ with support in $\ti{P}_1wH$ such
that $v_0(w)=e$. Recall that $\ti{P}_1wH=\ti{P}_1w(H\cap
U)\subseteq \widetilde{P_1}wP_1$. It follows directly from the
definition that $\lambda _{\widetilde{P_1}} (\widetilde{w}\nu
,w\sigma ,\psi )$ does not vanish in $v_0$.

By the lemma \ref{lem:Intertwining}, the intertwining operator
$A_{w}= t(w^{-1})J_{\ol{\ti{P}} \vert\ti{P_1}}(w\sigma _{\nu })$
is well defined and $\lambda _P(\nu ,\sigma,\psi )A_{w}=c\lambda
_{\ti{P_1}}(\ti{w}\nu ,w\sigma ,\psi )$, where $c$ is a non zero
constant. In particular, $\lambda _P(\nu ,\sigma,\psi )$ is non
zero in $A_{w}v_0$. It remains to show that $A_{w}v_0$ lies in the
subspace $(i_{P}^GE_{\nu })_0$ of $i_{P}^GE_{\nu }$ spanned by the
irreducible sub-representations. For this we will show on the one
hand that the Jacquet functor $j_{P_1}^G$ sends $A_{w}v_0$ to a
nonzero element of the subspace $(j_{P_1}^Gi_P^GE_{\nu })_{\nu
}^{\infty }$ of $j_{P_1}^Gi_P^GE_{\nu }$ generated by the
sub-representations which admit a generalized central character
with real part $\nu $. On the other hand we will show that the
Jacquet functor $j_{P_1}^G$ sends the subspace $(i_P^GE_{\nu })_0$
onto $(j_{P_1}^Gi_P^GE_{\nu })_{\nu }^{\infty }$. As $A_{w}v_0$ is
by \cite{H1} proposition {\bf 4.1.1} an element of
$S_{P_1}^H(i_P^GE_{\nu })$, it follows then from lemma {\bf 2.3}
that $A_{w}v_0$ lies in $S_{P_1}^H((i_P^GE_{\nu })_0)$ and
consequently in $(i_P^GE_{\nu })_0$. This finishes the proof.

Let us show first that $j_{P_1}^G$ sends $A_{w}v_0$ to a nonzero
element of $(j_{P_1}^Gi_P^GE_{\nu })_{\nu }^{\infty }$. As
$A_{w}v_0$ is a nonzero element in $S_{P_1}^H(i_P^GE_{\nu })$,
$j_{P_1}^GA_{w}v_0$ is nonzero by the proposition {\bf 2.2}. It is
then enough to show that $T_M$ acts on $j_{P_1}^GA_wv_0 $ by a
character equal to the central character $\chi _{\nu }$ of $\sigma
_{\nu }$. For every $a\in T_M$, $(i_{\ti{P_1}}^G w\sigma _{\nu
})(a)v_0-\chi _{\nu }(a)v_0$ has trivial image in
$j_{P_1}^Gi_{\ti{P_1}}^GwE_{\nu }$, because $j_{P_1}^Gv_0$ lies in
a subspace isomorphic to $\sigma _{\nu }$. This means that there
are $u_1,\dots ,u_t\in U_1$ and $v_1,\dots ,v_t$ in
$i_{\ti{P_1}}wE_{\nu }$, such that
$$(i_{\ti{P_1}}^Gw\sigma _{\nu })(a)v_0-\chi _{\nu
}(a)v_0=\sum _i[(i_{\ti{P_1}}^Gw\sigma _{\nu
})(u_i)v_i-v_i].$$ Applying on both sides $A_{w}$, one gets
$$i_P^G\sigma _{\nu }(a)A_{w}v_0-\chi _{\nu
}(a)A_{w}v_0=\sum _i[(i_P^G\sigma _{\nu })(u_i)A_{w}v_i-
A_{w}v_i].$$ It follows that $(j_{P_1}^Gi_P^G\sigma _{\nu })(a)$
acts on $j_{P_1}^G(A_{w}v_0)$ by the character $\chi _{\nu }(a)$.

It remains to show that the irreducible subspaces $\pi_i$ of
$i_P^G\sigma _{\nu }$ are the only sub-quotients such that
$j_{P_1}^G\pi _i$ admits as exponent a generalized character with
real part $\nu $. As $\nu$ is a regular element of $a_{M_{\nu
}}^{*}$, the length of $(j_{P_1}^Gi_P^GE)_{\nu }^{\infty }$ is by
the geometric lemma equal to the cardinality $l$ of the subset of
$W^M\backslash W^{M_{\nu }}/W^M$ formed by the elements which
stabilize $M$. It equals the length of $(j_{P_1\cap M_{\nu
}}^{M_{\nu }}\tau _{\nu })_{\nu }^{\infty }$. Denote by $l_i$ the
length of $(j_{P_1\cap M_{\nu }}(\tau _i)_{\nu })_{\nu }^{\infty
}$. An irreducible sub-representation $\pi_i$ of $i_P^G\sigma
_{\nu }$ is a sub-representation of some $i_{P_{\nu }}^G(\tau
_i)_{\nu }$. It is enough to show that the length of
$(j_{P_1}^G\pi _i)_{\nu }^{\infty }$ is $\geq l_i$, because the
$l_i$ sum up to $l$.

By the Frobenius reciprocity, one has, $$ \textup{Hom} _G(\pi
_i,i_{P_{\nu }}^G(\tau _i)_{\nu })= \textup{Hom} _M(j_{P_{\nu
}}^G\pi _i,(\tau _i)_{\nu }),$$ which means that $(\tau _i)_{\nu
}$ is a quotient of $j_{P_{\nu }}^G\pi _i$. From the transitivity
of the Jacquet functor, if follows that $j_{P_1\cap M_{\nu
}}^{M_{\nu }}(\tau _i)_{\nu }$ is a sub-quotient of $j_{P_1}^G\pi
_i$. As $(j_{P_1\cap M_{\nu }}^{M_{\nu }}(\tau _i)_{\nu })_{\nu
}^{\infty }$ has length $l_i$, it follows that the length of
$(j_{P_1}^G\pi _i)_{\nu }^{\infty }$ is at least $l_i$.
\end{proof}

\section{Reduction to an affine Hecke algebra setting}

Let $P=MU$ be a maximal standard parabolic subgroup of $G$. Denote
by $\alpha $ the unique simple $F$-root for $G$ which is not a
root for $M$ and by $\rho $ half the sum of the $F$-roots whose
root spaces span Lie$(U)$. Remark that $\rho $ lies in $a_M^*$.
For an $F$-root $\beta $, denote by $\underline{\beta }$ a root in
the absolute root system that restricts to $\beta $ and by
$\underline{\beta }^{\vee }$ the coroot corresponding to
$\underline{\beta }$. Write $\langle\cdot, \cdot\rangle $ for the
duality between $a_T^*$ and $a_T$. For $\lambda\in a_T^*$ and an
$F$-root $\beta $, we will sometimes write $\langle\lambda
,\underline{\beta }^{\vee }\rangle$. Here, $\underline{\beta
}^{\vee }$ will be identified with its orthogonal projection on
$a_T$. Put $\ti{\alpha }=\langle\rho ,\underline{\alpha }^{\vee
}\rangle ^{-1}\rho $. Let $(\tau ,V)$ be an irreducible discrete
series representation of $M$. By proposition 2.5, there is a
standard parabolic subgroup $P_1=M_1U_1$ of $G$ contained in $P$,
a unitary $\psi $-generic irreducible supercuspidal representation
$\sigma $ of $M_1$ and $\nu _{\tau }\in a_{M_1}^{M*}$, $\nu _{\tau
}\geq _{M\cap P_1}0$, such that $\tau $ is a sub-representation of
$i_P^G(\sigma\otimes\chi _{\nu _{\tau } })$. (Remark that we do
not need for the sequel such a strong result, but only the well
known existence of a generic supercuspidal support.)

Denote by $\Sigma _{red} (P_1)$ the set of reduced roots for the
action of the split center of $M_1$ on $Lie(U_1)$. Remark that to
any $\ol{\beta }\in\Sigma _{red} (P_1)$ one can associate a
parabolic subgroup $P_{1,\ol{\beta }}=M_{1,\ol{\beta
}}U_{1,\ol{\beta }}$, such that $P_1\cap M_{1,\ol{\beta }}$ is a
maximal standard parabolic subgroup of $M_{1,\ol{\beta }}$. For
$\ol{\beta }\in\Sigma _{red}(P_1)$, we will denote $\beta $ the
unique simple root for $M_{1,\ol{\beta }}$ which projects to
$\ol{\beta }$ and write then also $M_{1,\beta }$, $U_{1,\beta }$
and $P_{1,\beta }$.

Harish-Chandra's $\mu$-function $\mu (\sigma\otimes\chi _{\nu })$
is a product
$$\prod _{\ol{\beta }\in\Sigma _{red} (P_1)}\mu ^{M_{1,\beta } }
(\sigma\otimes\chi _{\nu }).$$ The set of roots $\ol{\beta }$ such
that $\mu ^{M_{1,\beta }} (\sigma\otimes\chi _{\nu })$ is not
holomorphic on $a_{M_1}^*$ as a function in $\nu $ is the set of
positive roots of a root system in $a_{M_1}^*$ (cf. \cite{Si2},
proposition 3.5). We will denote this root system by $\Sigma
_{\sigma }$. Denote by $\overline{\beta } ^{\vee }$ the coroot of
a root $\ol{\beta }$ in $\Sigma _{\sigma }$. Remark that by the
main result of \cite{H2}, $\nu _{\tau }$ is a residue point in
$a_{M_1}^{M*}$ for Harish-Chandra's $\mu $-function
$\nu\mapsto\mu^M (\sigma\otimes\chi _{\nu })$, defined relative to
$M$. (The precise definition of a residue point, which is given in
\cite{O}, does not matter here.)

Fix $\ol{\beta }\in\Sigma _{\sigma }$. In \cite{Sh3}, F. Shahidi
has associated to each irreducible component $r_{1,i}$ of the
adjoint action of $^LM_1$ on $Lie(\ ^LU_{1,\beta })$ a meromorphic
function $\gamma ^{M_{1,\beta }}(s,\sigma,r_{1,i},\psi )$. He
showed that there is at most one index $i$ such that $\gamma
^{M_{1,\beta }}(s,\sigma ,r_{1,i},\psi )$ has a zero on the real
axis and that this index equals in fact either $1$ or $2$. We will
denote it in the sequel by $\epsilon _{\beta }$, put $\epsilon
_{\ol{\beta }}= {\langle\ol{\beta },\underline{\beta }^{\vee
}\rangle\over 2}\epsilon _{\beta }$ and $i_{\overline{\beta
}}=\langle\ti{\alpha },\bar{\beta }^{\vee }\rangle$.

\null
\begin{prop} There are meromorphic functions $f$ and $f_i$ which are holomorphic
and non-vanishing on the real axis, such that
$$C_{\psi }(s\ti{\alpha },\tau )=f(s)\prod _{\ol{\beta }
\in\Sigma_{\sigma }^+-{\Sigma ^M_{\sigma }}^+}{1-q^{-\langle\nu _{\tau }
+s\widetilde{\alpha },\ol{\beta }^{\vee }\rangle}\over
1-q^{-{1\over\epsilon _{\ol{\beta }}}+\langle\nu _{\tau }
+s\widetilde{\alpha },\ol{\beta }^{\vee }\rangle}}$$
$$\gamma (is,\tau , r_i, \psi )=f_i(s)\prod _{\ol{\beta
},\ \epsilon_{\ol{\beta }}i_{\ol{\beta }}=i}{1-q^{-\langle\nu _{\tau }
+s\widetilde{\alpha },\ol{\beta }^{\vee }\rangle}\over
1-q^{-{1\over\epsilon _{\ol{\beta }}}+\langle\nu _{\tau }
+s\widetilde{\alpha },\ol{\beta }^{\vee }\rangle}}.\leqno{\it and}$$
\end{prop}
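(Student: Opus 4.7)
The plan is to derive both formulas from Shahidi's product expression
\[
C_{\psi}(s\widetilde{\alpha},\tau) = h(s)\prod_i \gamma(is,\tau,r_i,\psi)
\]
with $h$ holomorphic and non-vanishing (identity 3.11 of \cite{Sh3}), combined with the multiplicativity of the local coefficient and of $\gamma$-factors under induction in stages. Since $\tau\hookrightarrow i_{P\cap M}^M\sigma_{\nu_\tau}$ with $\sigma$ a generic supercuspidal representation of $M_1$, the cocycle relation applied to the tower $M_1\subset M\subset G$ gives
\[
C_{\psi}(s\widetilde{\alpha},\tau)\cdot C_{\psi}^M(\nu_\tau,\sigma) = h_0(s)\, C_{\psi}(\nu_\tau+s\widetilde{\alpha},\sigma)
\]
with $h_0$ holomorphic and non-vanishing on the real axis, together with analogous decompositions for each $\gamma(is,\tau,r_i,\psi)$ as a product of rank-one $\gamma$-factors for $\sigma$ indexed by $\overline{\beta}\in\Sigma_{red}(P_1)$. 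Since $\tau$ is tempered, the factor $C_{\psi}^M(\nu_\tau,\sigma)$ is itself holomorphic and non-vanishing on the real axis by the supercuspidal case of the theorem (which is known, \cite[proposition 7.3]{Sh3}), so the remaining contribution for $C_{\psi}(s\widetilde{\alpha},\tau)$ is indexed only by the roots in $\Sigma_{\sigma}^+-(\Sigma^M_\sigma)^+$.

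Next one computes the supercuspidal factors explicitly. The identity $C_\psi(\cdot,\sigma,w)C_\psi(w\cdot,w\sigma,w^{-1})=\mu(\sigma,\nu)$ combined with the rank-one product decomposition $\mu(\sigma\otimes\chi_\nu)=\prod_{\overline{\beta}\in\Sigma_{red}(P_1)}\mu^{M_{1,\beta}}(\sigma\otimes\chi_\nu)$ reduces the computation to Silberger's explicit formula \cite{Si2} for each rank-one factor, which for $\overline{\beta}\in\Sigma_\sigma$ takes the form
\[
\mu^{M_{1,\beta}}(\sigma\otimes\chi_\nu) = g_{\overline{\beta}}(\nu)\,\frac{(1-q^{-\langle\nu,\overline{\beta}^{\vee}\rangle})(1-q^{\langle\nu,\overline{\beta}^{\vee}\rangle})}{(1-q^{-1/\epsilon_{\overline{\beta}}+\langle\nu,\overline{\beta}^{\vee}\rangle})(1-q^{-1/\epsilon_{\overline{\beta}}-\langle\nu,\overline{\beta}^{\vee}\rangle})}
\]
with $g_{\overline{\beta}}$ holomorphic and non-vanishing, while $\mu^{M_{1,\beta}}$ is itself holomorphic and non-vanishing when $\overline{\beta}\notin\Sigma_\sigma$. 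Since Shahidi showed that the zeros of $C_\psi$ for a supercuspidal $\sigma$ lie on the unitary axis (the fact already invoked in the proof of Lemma 2.1), the factor $(1-q^{-\langle\nu,\overline{\beta}^{\vee}\rangle})(1-q^{-1/\epsilon_{\overline{\beta}}+\langle\nu,\overline{\beta}^{\vee}\rangle})^{-1}$, which carries the zero and pole of $\mu$ on the positive real axis, must belong to $C_\psi(\nu,\sigma,w)$ rather than to its $w$-conjugate. Specializing $\nu=\nu_\tau+s\widetilde{\alpha}$ and inserting into the multiplicativity relation of the first step yields the formula for $C_{\psi}(s\widetilde{\alpha},\tau)$.

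For the $\gamma$-factor formula, Shahidi's attribution of the contribution of the irreducible component $r_i$ is by grading coroots according to $\langle\widetilde{\alpha},\overline{\beta}^{\vee}\rangle = i_{\overline{\beta}}$: the rank-one factor attached to $\overline{\beta}$, when restricted to $\nu = \nu_\tau + s\widetilde{\alpha}$, depends on $s$ through the combination $\epsilon_{\overline{\beta}}\langle s\widetilde{\alpha},\overline{\beta}^{\vee}\rangle = \epsilon_{\overline{\beta}} i_{\overline{\beta}}\, s$, and so it enters $\gamma(is,\tau,r_i,\psi)$ precisely when $\epsilon_{\overline{\beta}} i_{\overline{\beta}} = i$. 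Collecting the contributions produces the second formula.

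The main obstacle will be the careful bookkeeping in the second step: verifying Silberger's rank-one formula in the displayed form, matching his parameter with the index $\epsilon_{\overline{\beta}}$ of Shahidi's framework, and justifying the asymmetric attribution of the zero/pole of $\mu$ to $C_\psi$ as opposed to its $w$-conjugate via the unitary-axis constraint. Once this is settled the proposition follows by repackaging already-established identities in the supercuspidal case.
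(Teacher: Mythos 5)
Your overall architecture is the paper's: start from identity (3.11) of \cite{Sh3}, use multiplicativity to reduce to rank-one factors attached to the supercuspidal support $\sigma$, and use the rank-one form of Harish-Chandra's $\mu$-function to produce the labels $\epsilon _{\ol{\beta }}$ and the set $\Sigma _{\sigma }$. But three steps, as written, do not hold up. First, the claim that $C_{\psi }^M(\nu _{\tau },\sigma )$ is holomorphic and non-vanishing ``by the supercuspidal case of the theorem'' is false: $\nu _{\tau }$ lies in the closed \emph{positive} chamber and is a residue point of $\mu ^M$, which is precisely where the rank-one local coefficients inside $M$ acquire poles (already for a Steinberg-type discrete series the rank-one $C_{\psi }^M$ has a pole at $\nu _{\tau }$); the known supercuspidal case only gives holomorphy in the negative chamber. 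So you cannot divide by this constant. The fix is to use multiplicativity in the form the paper uses, namely identity (3.13) of \cite{Sh3}, which writes $C_{\psi }(s\ti{\alpha },\tau )$ and each $\gamma (is,\tau ,r_i,\psi )$ directly as a product of rank-one factors over $\Sigma _{red}(P_1)\setminus \Sigma _{red}(P_1\cap M)$, with no division by a possibly degenerate constant.

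Second, your attribution of the zeros and poles of $\mu ^{M_{1,\beta }}$ between $C_{\psi }(\nu ,\sigma ,w)$ and its $w$-conjugate cannot be carried out with the unitary-axis statement about zeros alone: both candidate factors have their real zero at $\langle \nu ,\ol{\beta }^{\vee }\rangle =0$, which is on the unitary axis, so that fact distinguishes nothing; what has to be decided is which factor carries the pole at $+1/\epsilon _{\ol{\beta }}$ rather than $-1/\epsilon _{\ol{\beta }}$ (and that the order-two zero of $\mu $ at the origin splits simply). For this one needs the supercuspidal structure of the $\gamma $-factor itself — identity (7.4), Proposition 7.3 and Corollary 7.6 of \cite{Sh3} — which is exactly what the paper invokes in place of your attribution argument. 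Third, and most consequential for the second displayed formula: knowing the rank-one factor only through $\mu $ does not tell you in which component $r_i$ it lives. The condition $\epsilon _{\ol{\beta }}i_{\ol{\beta }}=i$ rests on the identity $\epsilon _{\beta }i_{\beta }=\epsilon _{\ol{\beta }}i_{\ol{\beta }}$, i.e.\ on the comparison $\epsilon _{\ol{\beta }}={\langle \ol{\beta },\underline{\beta }^{\vee }\rangle \over 2}\epsilon _{\beta }$ between Silberger's label and Shahidi's index $\epsilon _{\beta }\in \{1,2\}$, which the paper obtains by matching the real zero loci of the two presentations. Your $s$-dependence heuristic does not supply this: the factor written with $\ol{\beta }^{\vee }$ and $\epsilon _{\ol{\beta }}$ depends on $s$ through $q^{-i_{\ol{\beta }}s}$, not through $q^{-\epsilon _{\ol{\beta }}i_{\ol{\beta }}s}$, so as stated it would attach the factor to the wrong index whenever $\epsilon _{\ol{\beta }}\neq 1$. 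In short, the reduction to rank one is right, but the rank-one computation and the grading into the $\gamma _i$ must come from Shahidi's section 7 results for generic supercuspidal representations, with the $\mu $-function comparison used (as in the paper) to identify $\Sigma '=\Sigma _{\sigma }^+-\Sigma _{red}(P_1\cap M)$ and the relation between $\epsilon _{\beta }$ and $\epsilon _{\ol{\beta }}$.
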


\begin{proof} Denote by $r$ the adjoint action of $^LM$ on $V=Lie( ^LU)$.
This action decomposes in irreducible sub-representations $r_i$
corresponding to the weights of $T_{^LM}$. The space $V_i$ of
$r_i$ is generated by the root spaces $n_{\underline{\beta }^
{\vee }}$ corresponding to the roots $\underline{\beta }^{\vee }$
which have the same restriction to $T_{^LM}$ as
$i\underline{\alpha }^{\vee }$.

The local coefficient $C_{\psi }$ can be expressed by the $\gamma
$-function defined in \cite{Sh3}: up to a product by a holomorphic
function, $C_{\psi }(s\widetilde{\alpha },\tau )$ equals $\prod
_i\gamma (is,\tau, r_i,\psi )$ (cf. identity (3.11) in
\cite{Sh3}). Write $\gamma (is,\tau ,r_i,\psi )=\gamma
(\tau\otimes\chi _{s\widetilde{\alpha }},r_i,\psi )$. For
$\beta\in\Sigma _{red}(P_1)$ denote by $r_{1,i,\beta }$ the
restriction of $r_i$ to $^LM_1\rightarrow Lie(\ ^LU_{1,\beta })$.
Then, by the product formula for the $\gamma $-function (cf.
identity (3.13) in \cite{Sh3}), one has
$$\gamma (\tau\otimes \chi _{s\widetilde{\alpha }},r_i,\psi
)=\prod _{\ol{\beta }} \gamma ^{M_{1,\beta }}(\sigma\otimes \chi_{\nu
_{\tau } +s\widetilde{\alpha }}, r_{1,i,\beta },\psi ),$$ the
roots $\ol{\beta }$ being taken in $\Sigma _{red}(P_1)-\Sigma
_{red}(P_1\cap M)$.

Define $i_{\beta }=\langle\ti{\alpha },\underline{\beta }^{\vee
}\rangle $. The representation $r_{1,i,\beta }$ can only be
nonzero if $i_{\beta }\vert i$. Then, $\gamma ^{M_{1,\beta
}}(\sigma\otimes \chi _{\nu _{\tau } +s\widetilde{\alpha
}},r_{1,i,\beta },\psi )$ is equal to $\gamma ^{M_{1,\beta
}}(\sigma\otimes \chi _{\nu _{\tau } +s\widetilde{\alpha
}},r_{i\over i_{\beta }},\psi )$. This function is holomorphic and
nonzero for $s\in\mathbb R$, except perhaps if $i=\epsilon _{\beta
}i_{\beta }$ with $\epsilon _{\beta }\in\{1,2\}$. This can then
only happen at one of these two values for $\epsilon _{\beta }$
(cf. Corollary {\bf 7.6} of \cite{Sh3}). Then $\gamma ^{M_{1,\beta
}}(\sigma\otimes \chi _{\nu _{\tau } +s\widetilde{\alpha
}},r_{i\over i_{\beta }},\psi )$ is equal to the product of a
function in $s$ which is holomorphic and non-vanishing on the real
axis by $L(1-\epsilon_{\beta }\langle\nu _{\tau }+s\ti{\alpha }
,\underline{\beta }^{\vee }\rangle,\sigma^{\vee
},r_i)/L(\epsilon_{\beta }\langle\nu _{\tau }+s\ti{\alpha }
,\underline{\beta }^{\vee }\rangle,\sigma,r_i)$ (cf. identity
(7.4) of \cite{Sh3}). Up to a product by a holomorphic
non-vanishing function on the real axis, this quotient equals
$(1-q^{-\epsilon _{\beta }\langle\nu _{\tau }+s\ti{\alpha
},\underline{\beta }^{\vee }\rangle})/(1-q^{-1+\epsilon _{\beta
}\langle\nu _{\tau }+s\ti{\alpha },\underline{\beta }^{\vee
}\rangle})$.

Denote by $\Sigma '$ the subset of the roots $\ol{\beta}\in\Sigma
_{red}(P_1)\setminus\Sigma _{red}(P_1\cap M)$ such that $\gamma
^{M_{1,\beta }}(\sigma\otimes \chi _{\nu _{\tau }
+s\widetilde{\alpha }},r_{i\over i_{\beta }},\psi )$ has a pole or
a zero in some $s\in\mathbb R$. We have just proved that $C_{\psi
}(s\ti{\alpha },\tau )$ is, up to the product by a meromorphic
function without poles and zeroes on the real axis, equal to
$$\prod _{\ol{\beta}\in\Sigma '}{1-q^{-\epsilon _{\beta } \langle
\nu _{\tau }+s\ti{\alpha },\underline{\beta }^{\vee }\rangle}\over
1-q^{-1+\epsilon _{\beta }\langle\nu_{\tau }+s\ti{\alpha }
,\underline{\beta} ^{\vee }\rangle}}.$$ This
expression is the product of a regular function on $\mathbb R$
depending on $s$ without zeroes on the real axis by
$$\prod _{\ol{\beta }\in\Sigma'}{1-q^{-\langle\nu _{\tau }
+s\widetilde{\alpha },\underline{\beta}^{\vee }\rangle}\over
1-q^{-{1\over\epsilon _{\beta }}+\langle\nu _{\tau }
+s\widetilde{\alpha },\underline{\beta}^{\vee }\rangle}}.$$

Recall that Harish-Chandra's $\mu $-function $\mu
(\sigma\otimes\chi _{\nu })$ is a product $$\prod _{\ol{\beta }
\in\Sigma _{red} (P_1)}\mu ^{M_{1,\beta }} (\sigma\otimes\chi
_{\nu }).$$ The factor $\mu ^{M_{1,\beta } } (\sigma\otimes\chi
_{\nu })$ has a zero or a pole in $\nu\in a_{M_1}^*$, if and only
if $\ol{\beta }\in \Sigma_{\sigma }^+$. Then, there is a positive
real number $\epsilon _{\ol{\beta }}$, such that $\mu ^{M_{1,\beta
} } (\sigma\otimes\chi _{\nu })$ is the product of a function
without zeros and poles on $a_{M_1}^*$ by $\prod _{\beta
'\in\{\pm\ol{\beta }\}}{1-q^{\langle\nu ,{\beta '} ^{\vee
}\rangle}\over 1-q^{-{1\over\epsilon _{\overline{\beta }
}}+\langle\nu ,{\beta '} ^{\vee }\rangle}}$ (cf. \cite{Si1},
theorem 1.6). From the formula relating the $\mu -$function and
the local coefficient $C_{\psi }$ (cf. \cite{Sh3}, identity 1.4),
applied to $\mu ^{M_{1,\beta }}$ for each $\beta $, and the above
relation between local coefficient $C_{\psi }$ and the $\gamma
$-function, applied to $\gamma ^{M_{1,\beta }}$ for each $\beta $,
it follows that $\Sigma '=\Sigma _{\sigma }^+-\Sigma
_{red}(P_1\cap M)$. One deduces from this also that, for
$\beta\in\Sigma '$, the functions $s\mapsto -{1\over\epsilon
_{\ol{\beta }}}+\langle \nu_{\tau }+s\ti{\alpha },\ol{\beta
}^{\vee }\rangle$ and $s\mapsto -{1\over\epsilon _{\beta
}}+\langle \nu_{\tau }+s\ti{\alpha },\underline{\beta }^{\vee
}\rangle$ must have the same zeroes on the real axis. As
$\ol{\beta }^{\vee }$ is a scalar multiple of the projection of
$\beta ^{\vee }$ to $a_{M_1}$, it follows that $\epsilon
_{\overline{\beta }}$ is equal to the product of $\epsilon _{\beta
}$ by $\langle\ol{\beta },\underline{\beta }^{\vee }\rangle\over
2$.

Going back to the expressions for the $\gamma $-factors and
remarking that $\epsilon_{\beta }i_{\beta }=\epsilon _{\ol{\beta
}}i_{\ol{\beta }}$, one gets the statement for the different
$\gamma $-factors.
\end{proof}

\section{The conjectures for affine Hecke algebras}

Let $\Sigma $ be a reduced root system in a vector space
$a_{M_1}^*$. Let $a_{M_1}^{M*}$ be a subspace of codimension one,
generated by a subset $\Sigma ^{M+}$ of positive roots in a
standard sub-root system $\Sigma ^M$ of $\Sigma $. For each
positive root $\beta\in\Sigma $, let $\epsilon _{\beta }$ be a
number $>0$ such that $\epsilon _{\beta }=\epsilon _{\alpha }$ if
$\beta $ and $\alpha $ are conjugated.

Let $\mu $ be the meromorphic function on $a_{M_1}^*$ in $\nu $
defined by
$$\prod _{\beta\in\Sigma }{1-q^{\langle\nu ,\beta ^{\vee
}\rangle}\over 1-q^{-{1\over \epsilon _{\beta }}+\langle\nu ,\beta
^{\vee }\rangle}},$$ and let $\mu ^{M}$ be the factor of $\mu $
given by $$\prod _{\beta\in\Sigma ^{M}}{1-q^{\langle\nu ,\beta
^{\vee }\rangle}\over 1-q^{-{1\over \epsilon _{\beta }}+\langle\nu
,\beta ^{\vee }\rangle}}.$$

Let $\nu _{\tau }$ be a residue point \cite{O} for $\mu ^{M}$ in
$a_{M_1}^{M*}$. Denote by $\omega_{\alpha }$ the fundamental
weight in $a_{M_1}^*$, which corresponds to the simple root
$\alpha $ of $\Sigma $ which does not lie in $a_{M_1}^{M*}$.
Consider the functions $$C(s)=\prod _{\beta\in\Sigma^+-\Sigma
^{M+}}{1-q^{-\langle\nu _{\tau } +s\omega_{\alpha },\beta ^{\vee
}\rangle}\over 1-q^{-{1\over\epsilon _{\beta }}+\langle\nu _{\tau
} +s\omega _{\alpha },\beta ^{\vee }\rangle}}$$
$$\gamma _i(s\omega _{\alpha },\tau ,\psi )=f_i(s)\prod _{\beta ,
\epsilon_{\beta }\langle\omega _{\alpha },\beta ^{\vee }\rangle =i}{1-q^{-\langle\nu _{\tau }
+s\omega _{\alpha },\beta ^{\vee }\rangle}\over
1-q^{-{1\over\epsilon _{\beta }}+\langle\nu _{\tau }
+s\omega_{\alpha },\beta ^{\vee }\rangle}}.\leqno{\rm and}$$

\begin{thm} \it For each irreducible component
of $\Sigma $, suppose either that all the labels $\epsilon _{\beta
}$ are equal, or that ${\epsilon _{\beta '}/\epsilon _{\beta }}$
equals the ratio of the square of the lengths of $\beta '$ and
$\beta $.

Then the function $C(s)$ is holomorphic for $s<0$ and the
functions $\gamma _i(s)$ are non-vanishing for $s>0$.
\end{thm}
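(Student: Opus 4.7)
The plan is to reduce the theorem to Shahidi's tempered $L$-function conjecture in the special case of unramified principal series of split $p$-adic groups, which has already been established by Mui\'c and Shahidi in \cite{MSh}. The key observation is that $C(s)$ and each $\gamma_i(s)$ are products of elementary rational functions in $q^s$ whose poles and zeros are determined purely by the combinatorial data $(\Sigma,\Sigma^M,\{\epsilon_\beta\},\nu_\tau,\omega_\alpha)$. The hypothesis that on each irreducible component the labels $\epsilon_\beta$ are either all equal or scale with the squared root lengths is designed exactly so that this abstract data can be realised from a split unramified principal series datum, in which setting the desired holomorphicity is already known.

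Concretely, I would proceed in three steps. First, component-by-component in $\Sigma$, I would construct a split connected reductive $p$-adic group $G'$ together with standard parabolic subgroups $P_1'=T'U_1'\subset P'=M'U'$ such that $\Sigma(G',T')=\Sigma$, $\Sigma(M',T')=\Sigma^M$, and such that Harish-Chandra's $\mu$-function for the unramified principal series induced step by step up from $T'$ produces exactly the factors
$$\frac{1-q^{\langle\nu,\beta^\vee\rangle}}{1-q^{-1/\epsilon_\beta+\langle\nu,\beta^\vee\rangle}}$$
that enter the abstract definition of $C(s)$ and $\gamma_i(s)$. Here the parameter hypothesis is used essentially: the equal-parameter alternative corresponds to simply-laced components of $G'$, while the alternative in which $\epsilon_{\beta'}/\epsilon_{\beta}$ equals the ratio of squared root lengths corresponds to the multiply-laced components, where the scaling is forced by the Macdonald $c$-function formula for the unramified principal series. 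Second, by applying the main theorem of \cite{H2} to $G'$, I would recognise $\nu_\tau$ as the real part of the supercuspidal support of an irreducible discrete series representation $\tau'$ of $M'$; since $\tau'$ occurs in an unramified principal series of a split group, it is automatically $\psi$-generic. Third, applying Proposition 3.1 of the present paper to $(G',P',\tau')$, I would identify the abstract $C(s)$ with $C_\psi(s\ti{\alpha},\tau')$ and each abstract $\gamma_i(s)$ with $\gamma(is,\tau',r_i',\psi)$, up to factors which are holomorphic and non-vanishing on the real axis.

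At that point the theorem of \cite{MSh} provides precisely the required holomorphicity of $C_\psi$ on the negative Weyl chamber and non-vanishing of the $\gamma$-factors for $s>0$ in the split unramified principal series context, and these properties transfer to $C(s)$ and $\gamma_i(s)$ through the identification above. The main obstacle I anticipate is the construction step: one must check case-by-case across the Dynkin classification that every admissible triple $(\Sigma,\Sigma^M,\{\epsilon_\beta\})$ under the parameter hypothesis really does arise from some split unramified principal series datum, and that the residue point $\nu_\tau$ lifts faithfully to a residue point in that setup so that \cite{H2} may be invoked. The two alternatives in the hypothesis are exactly what is realisable in this way, which explains why they appear in the statement; once this combinatorial bookkeeping is carried out, the rest of the argument is a translation between settings.
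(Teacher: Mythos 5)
Your overall strategy --- realize the abstract datum by a split group, invoke \cite{H2} to see $\nu_\tau$ as the cuspidal support of a discrete series, and transfer holomorphicity from \cite{MSh} through Proposition 3.1 --- is exactly the paper's argument \emph{in the equal-parameter case}. But your treatment of the second alternative in the hypothesis rests on a false premise. For the unramified principal series of a \emph{split} $p$-adic group the Macdonald $c$-function (equivalently Harish-Chandra's $\mu$-function) involves the single parameter $q$ for every root, irrespective of root lengths: every factor is of the form $(1-q^{\langle\nu,\beta^\vee\rangle})/(1-q^{-1+\langle\nu,\beta^\vee\rangle})$, i.e.\ all labels $\epsilon_\beta$ equal $1$, even for multiply-laced $\Sigma$. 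So the alternative $\epsilon_{\beta'}/\epsilon_\beta=$ ratio of squared lengths is \emph{not} ``forced by the Macdonald formula for multiply-laced components''; it simply cannot be realized by a split unramified principal series datum, and \cite{MSh} (which concerns split groups) cannot be applied to it directly. The same objection applies to a common label $\epsilon\neq 1$. Your case-by-case ``combinatorial bookkeeping'' step would therefore fail at the very first multiply-laced or non-unit-label instance, and replacing split groups by quasi-split or ramified ones to produce unequal parameters would take you outside the scope of \cite{MSh}.

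The paper closes this gap without any group-theoretic realization of the unequal parameters: it first rephrases holomorphicity of $C(-s)$ and non-vanishing of $\gamma_i(s)$ as inequalities $z_n\geq z_p$ and $z_{n,i}\leq z_{p,i}$ between the numbers of roots in $\Sigma^+-\Sigma^{M+}$ where the numerators, respectively denominators, vanish. For a common label $\epsilon$ it rescales $\nu_\tau\mapsto\epsilon\nu_\tau$, which is again a residue point for labels $1$, reducing to the case already settled via \cite{MSh}. For labels proportional to squared lengths it replaces each long root $\beta$ by $\tilde\beta=\beta/\kappa$, passing to the dual root system $\tilde\Sigma$ (e.g.\ $B_n\leftrightarrow C_n$) with all labels equal to the common short-root value $\epsilon$; the counting functions, and the products $\epsilon_\beta\langle\omega_\alpha,\beta^\vee\rangle$ indexing the $\gamma_i$, are unchanged, and $\nu_\tau$ remains a residue point for $\tilde\Sigma^M$ with equal labels, so one is back in the previous case. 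You would need to incorporate this (or an equivalent) reduction for your argument to cover the stated hypotheses. One further small point: genericity of the discrete series subquotient of the unramified principal series is not automatic; the paper invokes \cite{MSh}, Proposition 3.1, for the existence of a \emph{generic} discrete series subquotient, and also uses that $\tau$ embeds in $i_{B\cap M}^M\chi_{w\nu_\tau}$ for some $w$ in the Weyl group of $M$, together with the $w$-invariance of $\Sigma^+-\Sigma^{M+}$, $\omega_\alpha$ and the pairing, to compare the two expressions.
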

\begin{proof} Suppose first all $\epsilon _{\beta }=1$. Denote by
$G_{\Sigma }$ the group of $F$-points of a split connected
reductive group defined over $F$ with root system $\Sigma $ and by
$B_{\Sigma }=T_{\Sigma }U_{\Sigma }$ a Borel subgroup which is
standard with respect to the choice of the ordering of $\Sigma $.
Then $\Sigma ^M$ corresponds to a standard maximal parabolic
subgroup $P=MU$ of $G_{\Sigma }$. As $\nu _{\tau }$ is a residue
point, the representation $i_{B\cap M}^M\chi _{\nu_{\tau }}$ has a
sub-quotient which is a discrete series representation \cite{H2}.
By \cite{MSh}, proposition 3.1, it has also a generic discrete
series sub-quotient. There is an element $w$ in the Weyl group for
$M$, such that $\tau $ is a sub-representation of $i_{B\cap
M}^M\chi _{w\nu _{\tau }}$. By \cite{MSh}, $C_{\psi
}(s\omega_{\alpha })$ is holomorphic for real $s<0$. By
proposition 3.1, $C_{\psi }(s\omega_{\alpha })$ is, up to a factor
which is holomorphic and non-vanishing on the real line, equal to
$$C_{\psi }(s\omega_{\alpha },\tau )=f(s)\prod
_{\beta\in\Sigma^+-\Sigma ^{M+}}{1-q^{-\langle w\nu _{\tau }
+s\omega_{\alpha },\beta ^{\vee }\rangle}\over 1-q^{-1+\langle
w\nu _{\tau } +s\omega_{\alpha },\beta ^{\vee }\rangle}}.$$ As $w$
leaves the set $\Sigma ^+-\Sigma ^{M+}$, the element
$\omega_{\alpha }$ and the product $\langle .,. \rangle$
invariant, the statement follows. The set over which factors the
function $\gamma _i$ is also invariant by the Weyl group of $M$.
As the numerator of $\gamma _i$ is just the reciprocal of the
i$th$ $L$-function of $\tau $, its non-vanishing property follows
from the holomorphicity of the corresponding $L$-function proved
in \cite{MSh}.

Denote by $z_n(s)$ (resp. $z_p(s)$) the number of roots $\beta
\in\Sigma^+-\Sigma ^{M+}$, such that $\langle\nu _{\tau }
+s\omega_{\alpha },\beta ^{\vee }\rangle=0$ (resp. $\langle\nu
_{\tau } +s\omega_{\alpha },\beta ^{\vee }\rangle={1\over \epsilon
_{\beta }}$) and $z_{n,i}$ (resp. $z_{p,i}$) the subsets
corresponding to the roots $\beta $ such that $\langle\omega
_{\alpha },\beta ^{\vee }\rangle =i$. The holomorphicity of
$C(-s)$ in $s$ is equivalent to $z_n(-s)\geq z_p(-s)$ and the
non-vanishing of $\gamma _i(s)$ to $z_{n,i}(s)\leq z_{p,i}(s)$. By
what we just remarked this is true for $s>0$, when all the
$\epsilon _{\beta }$ are equal to $1$.

Suppose now all $\epsilon _{\beta }$ equal to an $\epsilon >0$.
Multiplying the equations above by $\epsilon $, $z_n(s)$ is the
number of roots $\beta \in\Sigma^+-\Sigma ^{M+}$, such that
$\langle\epsilon\nu _{\tau } +\epsilon s\omega_{\alpha },\beta
^{\vee }\rangle=0$, and $z_p(s)$ the number of roots $\beta
\in\Sigma^+-\Sigma ^{M+}$, such that $\langle\epsilon\nu _{\tau }
+\epsilon s\omega_{\alpha },\beta ^{\vee }\rangle=1$. Observe
that, if $\nu _{\tau }$ is a residue point for all $\epsilon
_{\beta }=\epsilon $, then $\epsilon\nu _{\tau }$ is a residue
point for all $\epsilon _{\beta }=1$. Consequently, we are in the
situation of equal parameters $1$, where the holomorphicity and
non-vanishing results have just been proved.

Suppose now $\Sigma $ of type $B_n$, $C_n$, $F_4$ or $G_2$. Denote
by $\kappa $ the ratio of the square of the length of a long root
by the one of a short root. Suppose $\epsilon _{\beta '}/\epsilon
_{\beta }=\kappa $, if $\beta '$ is a long root and $\beta $ a
short root. Write $\ti{\beta }=\beta/\kappa $, if $\beta $ is a
long root, $\ti{\beta }=\beta $, if $\beta $ is a short root, and
denote by $\ti{\Sigma }$ the set of the $\ti{\beta }$. Then
$\ti{\Sigma }$ is a root system of type $C_n$, if $\Sigma $ was of
type $B_n$, of type $B_n$, if $\Sigma $ was of type $C_n$, and of
type $F_4$ (resp. $G_2$), if $\Sigma $ was of type $F_4$ (resp.
$G_2$). Let $\epsilon $ be the common value of the $\epsilon
_{\beta }$ with $\beta $ a short root. Then, $z_n(s)$ is the
number of roots $\ti{\beta }\in \ti{\Sigma}^+ -\ti{\Sigma }^{M+}$,
such that $\langle\nu _{\tau } +s\omega_{\alpha },\ti{\beta
}^{\vee }\rangle=0$, and $z_p(s)$ the number of roots $\beta
\in\ti{\Sigma }^+-\ti{\Sigma }^{M+}$, such that $\langle\nu _{\tau
} +s \omega_{\alpha },\ti{\beta } ^{\vee }\rangle=1/\epsilon $.
Remark that $\nu _{\tau }$ is a residue point for the set of roots
$\ti{\Sigma }^M$ with all labels equal $\epsilon$. So, we are back
in the equal parameter case, where the holomorphicity  and
non-vanishing result have already been considered above, adding
that $\epsilon _{\ti{\beta }}\langle\omega_{\alpha },\ti{\beta
}^{\vee }\rangle=\epsilon _{\beta }\langle\omega_{\alpha },\beta
^{\vee }\rangle $.
\end{proof}

\section{The Conjectures in the p-adic case}

Recall that $P=MU$ denotes a maximal standard parabolic subgroup
of $G$, $\alpha $ the unique simple $F$-root for $G$ which is not
a root for $M$, $\rho $ half the sum of the $F$-roots whose root
spaces span Lie$(U)$ and that $\ti{\alpha }=\langle\rho
,\underline{\alpha }^{\vee }\rangle ^{-1}\rho $.

\begin{thm} Let $(\tau ,V)$ be an irreducible
tempered representation of $M$. The function $C_{\psi
}(-s\widetilde{\alpha },\tau )$ and the functions $L(\tau, s,
r_i)$ are regular for $s>0$.
\end{thm}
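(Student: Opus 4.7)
The plan is to reduce from a general tempered $\tau$ to the generic discrete series case, apply Proposition 3.1 to express the relevant functions in the combinatorial form appearing in Theorem 4.1, and then verify that the abstract parameter hypothesis of Theorem 4.1 is satisfied by the data $(\Sigma_\sigma,\{\epsilon_{\overline{\beta}}\})$ coming from a generic supercuspidal.

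First I would reduce to a generic discrete series. If $\tau$ is not $\psi$-generic the assertion on $C_\psi$ is vacuous (the Whittaker functional vanishes), and the $L$-factors are defined from the inducing data so the generic case suffices. A generic tempered $\tau$ of $M$ is a direct summand of $i_{P_0\cap M}^M(\tau_0)_{\nu_0}$ for a $\psi$-generic discrete series $\tau_0$ of a standard Levi $M_0\subset M$ and a purely imaginary $\nu_0\in i a_{M_0}^*$. Shahidi's multiplicativity identities (1.4) and (3.13) of \cite{Sh3} express $C_\psi(s\widetilde\alpha,\tau)$ and each $\gamma(is,\tau,r_i,\psi)$ as products of the corresponding objects for $\tau_0$ evaluated at $\nu_0+s\widetilde\alpha$, up to entire non-vanishing factors. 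Because $\nu_0$ is imaginary, holomorphicity and non-vanishing on the real $s$-axis are equivalent to the same statements for $\tau_0$.

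Second, for a generic discrete series $\tau_0$ of $M$, Proposition \ref{prop:cuspidal support generic discrete series} embeds $\tau_0$ into $i_{P_1}^M\sigma_{\nu_\tau}$ with $\sigma$ unitary $\psi$-generic supercuspidal on a standard Levi $M_1\subset M$ and $\nu_\tau\in\overline{a_{M_1}^{M*+}}$; by the main theorem of \cite{H2}, $\nu_\tau$ is a residue point for Harish-Chandra's $\mu^M$-function associated to $\sigma$. Proposition 3.1 then rewrites $C_\psi(s\widetilde\alpha,\tau_0)$ and $\gamma(is,\tau_0,r_i,\psi)$ as the explicit products over $\Sigma_\sigma^+-\Sigma_\sigma^{M+}$ that appear in the statement of Theorem 4.1, with labels $\epsilon_{\overline{\beta}}$.

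The crux, and the main obstacle, is to check that $(\Sigma_\sigma,\{\epsilon_{\overline\beta}\})$ satisfies the hypothesis of Theorem 4.1: on each irreducible component either all labels coincide, or ratios of labels equal ratios of squared root-lengths. My plan is to compute the $\epsilon_{\overline{\beta}}$ from the rank-one local coefficients attached to the sub-Levis $M_{1,\beta}$ via Shahidi's identities (7.4)–(7.6) of \cite{Sh3}, combined with the relation $\epsilon_{\overline\beta}=\tfrac12\langle\overline\beta,\underline\beta^\vee\rangle\epsilon_\beta$ established in the proof of Proposition 3.1, and then to run through the possible types of irreducible components of $\Sigma_\sigma$ that arise from a generic supercuspidal $\sigma$. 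The genericity of $\sigma$ enters essentially, since it constrains the zeros of $\gamma^{M_{1,\beta}}(s,\sigma,r_{1,i,\beta},\psi)$ and so fixes both $\epsilon_\beta$ and the index $\epsilon_\beta\in\{1,2\}$ at which the zero occurs.

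Once this parameter condition is verified, applying Theorem 4.1 to the data $(\Sigma_\sigma,\{\epsilon_{\overline\beta}\},\nu_\tau)$ gives holomorphicity of $C(-s)$ and non-vanishing of $\gamma_i(s)$ for $s>0$. Via Proposition 3.1 this yields the claim for $C_\psi(-s\widetilde\alpha,\tau_0)$ and $\gamma(is,\tau_0,r_i,\psi)$, and via the reduction in the first paragraph it yields the claim for $\tau$ itself. Finally, since $L(s,\tau,r_i)$ is the reciprocal of the numerator of $\gamma(s,\tau,r_i,\psi)$, non-vanishing of $\gamma$ on $\{s>0\}$ translates into holomorphicity of $L(s,\tau,r_i)$ there, completing the proof.
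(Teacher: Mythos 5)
Your overall architecture coincides with the paper's: reduce via Shahidi's multiplicativity (identities (1.4) and (3.13) of \cite{Sh3}) from tempered $\tau$ to a $\psi$-generic discrete series, then use Proposition 2.5, the residue-point result of \cite{H2} and Proposition 3.1 to put $C_{\psi}$ and the $\gamma_i$ in the combinatorial form of Theorem 4.1, and finally check the label hypothesis of Theorem 4.1 for the data $(\Sigma_{\sigma},\{\epsilon_{\ol{\beta}}\})$. The first two steps are correct and are exactly what the paper does.

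The problem is the third step, which is the actual content of the theorem and which you only announce as a plan. The paper's verification is not a routine check: it first reduces (after conjugating so that $\beta+\beta'$ is a root) to the case where $\Sigma$ is irreducible and $\Sigma^{M_1}$ has corank $2$, proves Lemma 5.2 converting the ratio $\epsilon_{\ol{\beta'}}/\epsilon_{\ol{\beta}}$ into $\epsilon_{\beta'}/\epsilon_{\beta}$ times explicit length ratios of the absolute roots and of the weights $\ti{\beta}$, and then carries out the full case-by-case analysis of Section 6 over all corank-2 standard Levis of all quasi-split absolutely quasi-simple types, isolating the relevant cases where $\Sigma_{\mu}$ is of type $B_2$ or $G_2$. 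Crucially, in several of these cases (for instance $E_7$ with $\Delta-\Delta^M=\{\alpha_4,\alpha_6\}$, several $E_8$ configurations, $F_4$, and ${}^2E_6$) the values $\epsilon_{\beta}\in\{1,2\}$ permitted by the lists of \cite{L} and \cite{Sh2} would allow a ratio of labels violating the hypothesis of Theorem 4.1, and an extra input is required to exclude it: Lemma 6.1, which shows that certain second $L$-functions of generic supercuspidals are constant (via \cite[lemma 7.4]{Sh3}) and hence forces $\epsilon_{\beta}=1$ in those cases. Your proposal shows no awareness of this obstruction or of how genericity is converted into the needed constraint beyond the generalities of (7.4)--(7.6) of \cite{Sh3}; ``running through the possible types of $\Sigma_{\sigma}$'' does not by itself decide the label ratios, and one of the configurations at stake (the $E_6\times A_1$ Levi in $E_8$) is precisely the previously open case the paper was written to settle. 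As it stands, the crux of the argument is therefore a genuine gap rather than an omitted routine verification.
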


\begin{proof} By the product formula for the local coefficient
$C_{\psi }$ and the $\gamma-$functions, one is reduced to consider
the case, where $\tau $ is a discrete series representation. Here
the theorems 3.1 and 4.1 apply. So, it remains to show that the
labels $\epsilon _{\ol{\beta }}$ satisfy the assumption in the
statement of the theorem 4.1. Denote by $\Sigma $ the reduced
$F$-root system for $G$, by $P_1=M_1U_1$ and $\sigma $
respectively the standard parabolic subgroup and the generic
supercuspidal representation of $M_1$ from which $\tau $ is
induced and by $\Sigma ^{M_1}$ the reduced $F$-root system of the
Levi subgroup $M_1$.

One has to show that for two roots $\beta '$ and $\beta $ in
$\Sigma_{\sigma }$ the quotient $\epsilon _{\ol{\beta '}}/\epsilon
_{\ol{\beta }}$ satisfies the assumptions in the statement of
theorem 4.1. We will prove first that one can reduce to the case
where $\Sigma $ is irreducible and $\Sigma ^{M_1}$ of corank 2.

Remark that the labels $\epsilon _{\ol{\beta '}}$ and $\epsilon
_{\ol{\beta }}$ do not change if one conjugates $\beta '$ and
$\beta $ by an element of the Weyl group of $\Sigma _{\sigma }$.
So, we may suppose that $\beta +\beta '$ is a root in $\Sigma
_{\sigma }$. Suppose that the corank of $\Sigma ^{M_1}$ in $\Sigma
$ is $>2$ and denote by $\Sigma ^{M'}$ the sub-root system of
$\Sigma $ of the minimal Levi sub-group $M'$ of $G$ containing
$\Sigma ^{M_1}$, $\beta $ and $\beta '$. Then, possibly after
conjugation, $\Sigma ^{M_1}$ is a standard corank 2 sub-root
system in $\Sigma ^{M'}$ and the values of the numbers defined in
the proposition 3.1 are the same with respect to $\Sigma ^{M'}$ or
$\Sigma $. If $\Sigma ^{M'}$ is not irreducible, then $\beta $ and
$\beta '$ must be projections of roots in a same irreducible
component $\Sigma _1$ of $\Sigma $, because $\beta +\beta '$ is a
root in $\Sigma _{\sigma }$. The system $\Sigma ^{M_1}\cap\Sigma
_1$ is a sub-root system of corank 2 in $\Sigma _1$, and one is
reduced to study the subgroup of $G$ generated by $\Sigma ^{M_1}
\cap \Sigma _1$ relative to the one generated by $\Sigma _1$ with
the restriction of $\sigma $ to this subgroup. So, one is finally
reduced to the case, where $\Sigma ^{M_1}$ is a sub-root system of
corank 2 of $\Sigma $. This situation is considered case by case
in the next section, using the following lemma.\end{proof}

\begin{lem} Denote by $(.,.)$ the Weyl group-invariant
scalar product in the space spanned by the absolute roots of $G$
and, for a root $\overline{\beta }$ in $\Sigma _{\sigma }$, by
$\omega ^{M_{1,\beta }}_{\beta }$  the fundamental weight
corresponding to $\beta $ relative to the root system $\Sigma
^{M_{1,\beta }}$ and by $\ti{\beta }$ the scalar multiple of
$\omega ^{M_{1,\beta }}_{\beta }$ that verifies $\langle\ti{\beta
},\underline{\beta }^{\vee }\rangle =1$.

The labels $\epsilon _{\ol{\beta '}}, \epsilon _{\ol{\beta }}$ and
$\epsilon _{\beta '}, \epsilon _{\beta }$ defined in section 3
verify the formula
$${\epsilon _{\ol{\beta '}}\over\epsilon _{\ol{\beta }}}={\epsilon _{\beta '}
(\underline{\beta '},\underline{\beta '})(\ti{\beta },\ti{\beta })\over
\epsilon _{\beta }(\underline{\beta },\underline{\beta })(\ti{\beta '},
\ti{\beta '})}.$$
\end{lem}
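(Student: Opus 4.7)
The plan is to reduce the identity to two pieces of data already in hand: the relation $\epsilon_{\ol{\beta}} = \frac{1}{2}\langle \ol{\beta}, \underline{\beta}^\vee\rangle\,\epsilon_\beta$ derived at the end of the proof of Proposition 3.1, and an explicit description of $\ti{\beta}$ as a multiple of $\ol{\beta}$. Once these are in place, the formula follows from an elementary inner-product calculation.

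More precisely, substituting the relation above, the ratio $\epsilon_{\ol{\beta'}}/\epsilon_{\ol{\beta}}$ splits as $(\epsilon_{\beta'}/\epsilon_\beta)\cdot(\langle\ol{\beta'}, \underline{\beta'}^\vee\rangle/\langle\ol{\beta}, \underline{\beta}^\vee\rangle)$, so the asserted equality reduces to the purely geometric identity
$$
\frac{\langle \ol{\beta'}, \underline{\beta'}^\vee\rangle}{\langle \ol{\beta}, \underline{\beta}^\vee\rangle} \;=\; \frac{(\underline{\beta'},\underline{\beta'})(\ti{\beta},\ti{\beta})}{(\underline{\beta},\underline{\beta})(\ti{\beta'},\ti{\beta'})}.
$$
To pin down $\ti{\beta}$, I would observe that the subspace $a_{M_1}^{M_{1,\beta}*}$ of $a_{M_1}^*$ is one-dimensional, because $P_1\cap M_{1,\beta}$ is a maximal parabolic subgroup of $M_{1,\beta}$ with Levi factor $M_1$. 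Both $\omega^{M_{1,\beta}}_{\beta}$ and $\ol{\beta}$ lie in this subspace: the first by its defining property, and the second because $\ol{\beta}$ is a root of $M_{1,\beta}$, so its root space lies in $\mathrm{Lie}(M_{1,\beta})$, on which the split center of $M_{1,\beta}$ acts trivially. The normalization $\langle\ti{\beta}, \underline{\beta}^\vee\rangle=1$ then forces $\ti{\beta} = \ol{\beta}/\langle\ol{\beta}, \underline{\beta}^\vee\rangle$.

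The final step is to translate the coroot pairings into the Weyl-invariant inner product. Identifying $a_T^*$ with $a_T$ via $(\cdot,\cdot)$, the orthogonal projection $p: a_T^* \to a_{M_1}^*$ sends $\underline{\beta}$ to $\ol{\beta}$ and hence sends $\underline{\beta}^\vee = 2\underline{\beta}/(\underline{\beta},\underline{\beta})$ to $[(\ol{\beta},\ol{\beta})/(\underline{\beta},\underline{\beta})]\,\ol{\beta}^\vee$. Pairing with $\ol{\beta}$ gives $\langle\ol{\beta}, \underline{\beta}^\vee\rangle = 2(\ol{\beta},\ol{\beta})/(\underline{\beta},\underline{\beta})$, and therefore $(\ti{\beta},\ti{\beta}) = (\underline{\beta},\underline{\beta})^2/(4(\ol{\beta},\ol{\beta}))$; substituting these and their primed analogues into the reduced identity, all factors $(\ol{\beta},\ol{\beta})$ and $(\ol{\beta'},\ol{\beta'})$ cancel symmetrically and the formula drops out. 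I expect the identification of $\ti{\beta}$ as a specific multiple of $\ol{\beta}$ in the one-dimensional space $a_{M_1}^{M_{1,\beta}*}$ to be the main place where one must track carefully the absolute versus relative root data; the rest is bookkeeping with the inner product.
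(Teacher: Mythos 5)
Your proposal is correct and follows essentially the same route as the paper: it uses the relation $\epsilon_{\ol{\beta}}=\frac{1}{2}\langle\ol{\beta},\underline{\beta}^{\vee}\rangle\epsilon_{\beta}$ from Section 3, the fact that $\ol{\beta}$ and $\ti{\beta}$ are proportional in the one-dimensional space $a_{M_1}^{M_{1,\beta}*}$, and the Weyl-invariant inner product to pin down the proportionality constant. The only difference is organizational: the paper computes $\kappa$ with $\ol{\beta}=\kappa\ti{\beta}$ directly via $(\ti{\beta},\ol{\beta})=(\ti{\beta},\underline{\beta})$, whereas you invert the relation and let the auxiliary factors $(\ol{\beta},\ol{\beta})$, $(\ol{\beta'},\ol{\beta'})$ cancel at the end.
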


\begin{proof} Recall that $\epsilon _{\ol{\beta
}}={\langle\ol{\beta },\underline{\beta }^{\vee }\rangle\over 2}
\epsilon _{\beta }$. So, it is enough to show that $$\ol{\beta
}={(\underline{\beta },\underline{\beta }) \over 2(\ti{\beta
},\ti{\beta })}\ti{\beta }.$$ Remark first that for every $\lambda
$ in $a_T^*$ and every root $\gamma $, one has $\langle\lambda
,\underline{\gamma }^{\vee }\rangle={2\over (\underline{\gamma
},\underline{\gamma })}(\lambda ,\underline{\gamma })$. It is
clear that $\ol{\beta }=\kappa\ti{\beta }$ for some constant
$\kappa$, because both lie in the one-dimensional vector space
$a_{M_1}^{M_{1,\beta }*}$. Then, one computes
$$\kappa ={(\ti{\beta },\ol{\beta })\over (\ti{\beta
},\ti{\beta })}={(\ti{\beta },\underline{\beta })\over (\ti{\beta
},\ti{\beta })}={(\underline{\beta },\underline{\beta })\over
2(\ti{\beta },\ti{\beta })}\langle\ti{\beta },\underline{\beta
}^{\vee }\rangle={(\underline{\beta },\underline{\beta })\over
2(\ti{\beta },\ti{\beta })}.$$
\end{proof}

\section{Labels of supercuspidal $\mu $-functions in the generic
case}

Remark first that our situation is invariant for restriction of
scalars: if $\underline{H}$ is a quasi-split connected reductive
$F$-group, $F'/F$ a finite Galois extension and $\underline{G}=
Res_{F'/F}\underline{H}$, then the absolute root system for
$\underline{G}$ is a union of copies of the absolute root system
of $\underline{H}$ with an action of the Galois group permuting
these copies. In particular, the absolute roots (resp. duals of
the absolute roots) for $\underline{G}$ restrict to $F$-roots as
do the absolute roots (resp. duals of the absolute roots) for
$\underline{H}$. So, as every $F$-quasi-simple group G is the
restriction of scalars of an absolute quasi-simple group, it is
enough to consider the latter ones. (Of course, in the split case,
this does not make any difference.)

In this section, we give for every absolute root system of an
absolute quasi-simple quasi-split group over $F$ its
Dynkin-diagram, its $F$-root system $\Sigma $, the list of the
standard sub-root systems $\Sigma ^M$ of corank 2 of $\Sigma $ and
the set of quotient roots $\Sigma (T_M)$. We consider then the
subset $\Sigma _{\mu }$ formed by the roots $\beta $ in $\Sigma
(T_M)$ such that $\Sigma ^M$ is self-conjugated as a corank one
sub-root system of $\Sigma ^{M_{\beta }}$. It turns out that
$\Sigma _{\mu }$ is always a root system, and it is clear that any
root system $\Sigma _{\sigma }$ which may appear from the above
context must be a sub-root system of $\Sigma _{\mu }$.

One does not have to study further the cases where $\Sigma _{\mu
}$ is a product of irreducible root systems of type $A$, because
in this case all roots which lie in a same irreducible component
are conjugated. So, only the cases where $\Sigma _{\mu }$ is of
type $B_2$ or $G_2$ will require further attention. We call these
cases the \it relevant cases. \rm With help of lemma 5.2, we
compute in these cases the possible values of the labels $\epsilon
_{\overline{\beta }}$ corresponding to the long and short root,
using the list in \cite{L} completed in \cite{Sh2}. In some cases,
we will need in addition the following lemma to prove that
unwanted ratios for the labels do not appear.

\begin{lem} Let $\sigma $ be a generic supercuspidal
representation of a maximal Levi subgroup $M'$ of a quasi-split
connected reductive group $G'$ defined over $F$. The second
$L$-function $L(s,\sigma ,r_2)$ attached to $\sigma $ is constant
in the following cases:

(i) $G'$ is split of type $D_5$ and $M'$ is of type $A_2\times
A_1\times A_1$,

(ii) $G'$ is split of type $D_7$ and $M'$ is of type $A_2\times
D_4$,

(iii) $G'$ is split of type $C_3$ and $M'$ is of type $A_2$.

(iv) $G'$ is quasi-split of type $^2A_5$ and $M'$ is the
restriction of scalars of a group of type $A_2$ relative to a
cyclic extension of $F$ of degree $2$.

(v) $G'$ is quasi-split of type $^2D_4$ and $M'$ a split group of
type $A_2$.
\end{lem}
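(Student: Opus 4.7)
The plan is to compute the Shahidi representation $r_2$ of $^LM'$ explicitly in each of the five cases and to show that it either vanishes or decomposes on a $\mathrm{GL}_3$-factor of $^LM'^{\circ}$ as the exterior-square $\Lambda^2$ of the standard representation; either alternative forces $L(s,\sigma,r_2)\equiv 1$.

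Cases (iii) and (iv) are handled by a direct weight calculation showing $r_2=0$. In case (iii), $M'=\mathrm{GL}_3$ is the Siegel Levi of $\mathrm{Sp}_6$, and every positive root in $\mathrm{Lie}(U')$ has the long simple root $\alpha_3$ of $C_3$ appearing with coefficient $1$. In case (iv), the relative $F$-root system of $^2A_5$ is $C_3$, the removed simple $F$-root is the long one, and the highest relative root has this root with coefficient $1$. In both situations the weight-$2$ component of $\mathrm{Lie}(^LU')$ is trivial and $L(s,\sigma,r_2)\equiv 1$ without further work.

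For cases (i), (ii), and (v), I would verify that the weight-$2$ piece of $\mathrm{Lie}(^LU')$ under the central torus of $^LM'^{\circ}$ consists of the three roots $e_i+e_j$ with $1\le i<j\le 3$ in the ambient type-$D$ root system; that the non-$\mathrm{GL}_3$ factors of $M'$ (of types $A_1\times A_1$ in (i), $D_4$ in (ii), and trivial in (v)) act trivially on this span because the corresponding root subgroups involve only the coordinates $e_4,\dots$; and that in case (v) the span is pointwise fixed by the Galois swap $\alpha_3\leftrightarrow\alpha_4$, since each of these three roots is a Galois-symmetric sum. Under the $\mathrm{GL}_3$-factor of $^LM'^{\circ}$ corresponding to the $A_2$-subdiagram $\{\alpha_1,\alpha_2\}$, the representation $r_2$ is then precisely $\Lambda^2(\mathrm{std})$.

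Writing $\sigma=\sigma_1\otimes\sigma'$ with $\sigma_1$ the $\mathrm{GL}_3$-component of the cuspidal datum, the isomorphism $\Lambda^2(\mathrm{std}_{\mathrm{GL}_3})\cong\mathrm{std}^{\vee}\otimes\det$ reduces $L(s,\sigma,r_2)$ to the Godement--Jacquet standard $L$-function of the supercuspidal twist $\widetilde{\sigma_1}\otimes\omega_{\sigma_1}$, which is identically $1$ since any twist of a supercuspidal of $\mathrm{GL}_n$ ($n\ge 2$) is supercuspidal and its standard $L$-function vanishes. The main obstacle is the case-by-case root-system bookkeeping and, in the quasi-split cases (iv) and (v), ensuring that the Galois action on $^LM'$ does not disturb the identification of $r_2$; both points are resolved by explicit inspection of the positive roots and their Galois-invariance.
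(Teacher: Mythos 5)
The decisive problem is your treatment of case (iii) (and the same reasoning in case (iv)). You grade $\mathrm{Lie}(U')$ by the coefficient of the removed simple root in the roots of $G'$ itself, and, since the Siegel unipotent radical of $\mathrm{Sp}_6$ is abelian, you conclude $r_2=0$. But Shahidi's $r_i$ are the graded pieces of $\mathrm{Lie}({}^LU')$ inside $\mathrm{Lie}({}^LG')$, i.e.\ the grading lives on the coroot side: for $C_3$ the dual group is $\mathrm{SO}_7(\mathbb C)$, the nilradical of the dual parabolic is two-step, and the coroots of the long roots $2e_i$ pair to $1$ with $\widetilde{\alpha}$ while the coroots of $e_i+e_j$ pair to $2$. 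Hence for $M'=\mathrm{GL}_3\subset \mathrm{Sp}_6$ one has $r_1=\mathrm{std}$ and $r_2=\Lambda^2(\mathrm{std})\neq 0$; case (iii) is exactly the exterior-square case the lemma exists for, not a vacuous one, and it requires the same argument you apply to (i), (ii), (v). Because $C_n$ is not simply laced, ``coefficient of $\alpha$ in $\beta$'' and ``coefficient of $\alpha^{\vee}$ in $\beta^{\vee}$'' genuinely differ, so your criterion cannot be used; the analogous substitution of relative-root coefficients for the dual-side grading also invalidates your argument for (iv). The paper treats all five cases uniformly: it identifies the second $L$-function with the exterior square of the $A_2$ (i.e.\ $\mathrm{GL}_3$) part in each case.

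For cases (i), (ii), (v) your identification of $r_2$ (the other factors and, in (v), the Galois swap acting trivially) is correct, and your endgame is a legitimate variant of the paper's: the paper, instead of using $\Lambda^2(\mathrm{std}_{\mathrm{GL}_3})\cong \mathrm{std}^{\vee}\otimes\det$ plus Godement--Jacquet, reinterprets this exterior square as the first and only $L$-function attached to a non-self-associate maximal parabolic and quotes \cite[Lemma 7.4]{Sh3}, which gives $L\equiv 1$ for supercuspidal data. If you keep the Godement--Jacquet route you still owe one step: for supercuspidal $\sigma$, Shahidi's $L(s,\sigma,r_2)$ is defined through the $\gamma$-factor, so you must know that this $\gamma$ (hence $L$) agrees with the standard $\gamma$/$L$ of the twisted contragredient on $\mathrm{GL}_3$; that comparison (multiplicativity and agreement with the Rankin--Selberg/Godement--Jacquet theory in the $\mathrm{GL}$ case) is precisely what the paper's ``non-associated setting'' reformulation together with Shahidi's lemma is designed to bypass. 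The fix is to delete the $r_2=0$ claims and run the same $\Lambda^2$ argument in (iii) and (iv) as in the other cases.
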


\begin{proof} The second $L$-function is here in fact the one
attached to the exterior square L-function of the $A_2$ part which
can be reinterpreted as the first and only $L$-function in a non
associated setting. So it follows from \cite[lemma 7.4]{Sh3} that
the $L$-function is $1$.
\end{proof}

We will denote in the sequel abusively still by $\alpha _i$ the
restriction of a relative root $\alpha _i$, if it is non trivial.

\subsection{The split cases:} Here $\ti{\beta }$ is always equal
to the fundamental weight $\omega _{\beta }^{M_{\beta }}$ in
$\Sigma ^{M_{\beta }}$.

\null $\mathbf A_n:$

\input{An.TpX}

$\Delta -\Delta^M=\{\alpha _i,\alpha _j\}$, $1\leq i<j\leq n$, $M$
is of type $A_{i-1}\times A_{j-i-1}\times A_{n-j}$, $\Sigma
_{red}(T_M)=\{\alpha _i,\alpha _j,\alpha _i+\alpha _j\}$, so that
$\Sigma _{\mu }$ is always a product of root systems of type $A$.
Consequently, there are no relevant cases.

\null $\mathbf B_n:$

\input{Bn.TpX}

\begin{enumerate}

\item

$\Delta -\Delta^M=\{\alpha _i,\alpha _j\}$, $1\leq i<j\leq
n-1$, $M$ is of type $A_{i-1}\times A_{j-i-1}\times B_{n-j}$,
$\Sigma _{red}(T_M)=\{\alpha _i,\alpha _j,\alpha _i+\alpha
_j,\alpha _i+2\alpha _j\}$ is of type $B_2$, $\alpha _i$ is
the long root, $M_{\alpha _i}$ is of type $A_{j-1}\times
B_{n-j}$ and $M_{\alpha _j}$ is of type $A_{i-1}\times
B_{n-i}$.  In order of $\Sigma _{\mu }$ to be of type $B_2$,
$M$ must be self-conjugate in $M_{\alpha _i}$, which means
that $j=2i$. Then $(\omega ^{M_{\alpha _i}}_{\alpha _i},\omega
^{M_{\alpha _i}}_{\alpha _i})=j/2$, $(\omega ^{M_{\alpha
_j}}_{\alpha _j},\omega ^{M_{\alpha _j}}_{\alpha _j})=j$,
$(\alpha _i,\alpha _i)=(\alpha _j,\alpha _j)=2$, $\epsilon
_{\alpha _i}$ is necessarily $1$ and $\epsilon _{\alpha _j}$
may be $1$ or $2$. One deduces that the assumptions are
satisfied.

\item

$\Delta -\Delta^M=\{\alpha _i,\alpha _n\}$, $1\leq i<n$, $M$
is of type $A_{i-1}\times A_{n-i-1}$, $\Sigma
_{red}(T_M)=\{\alpha _i,\alpha _n,\alpha _i+\alpha _n,\alpha
_i+2\alpha _n\}$ is of type $B_2$, $\alpha _i$ is the long
root. In order of $\Sigma _{\mu }$ to be of type $B_2$, $M$
must be self-conjugate in $M_{\alpha _i}$, which means that
$n=2i$. Then $(\omega ^{M_{\alpha _i}}_{\alpha _i},\omega
^{M_{\alpha _i}}_{\alpha _i})=n/2$, $(\omega ^{M_{\alpha
_n}}_{\alpha _n},\omega ^{M_{\alpha _n}}_{\alpha _n})=n/4$,
$(\alpha _i,\alpha _i)=2$, $(\alpha _n,\alpha _n)=1$,
$\epsilon _{\alpha _i}$ and $\epsilon _{\alpha _n}$ are
necessarily $1$. One deduces that the assumptions are
satisfied.

\end{enumerate}

\null $\mathbf C_n:$

\input{Cn.TpX}

\begin{enumerate}

\item

$\Delta -\Delta^M=\{\alpha _i,\alpha _j\}$, $1\leq i<j\leq
n-1$, $M$ is of type $A_{i-1}\times A_{j-i-1}\times C_{n-j}$,
$\Sigma _{red}(T_M)=\{\alpha _i,\alpha _j,\alpha _i+\alpha
_j,\alpha _i+2\alpha _j\}$ is of type $B_2$, $\alpha _i$ is
the long root, $M_{\alpha _i}$ is of type $A_{j-1}\times
C_{n-j}$ and $M_{\alpha _j}$ is of type $A_{i-1}\times
C_{n-i}$. In order of $\Sigma _{\mu }$ to be of type $B_2$,
$M$ must be self-conjugate in $M_{\alpha _i}$, which means
that $j=2i$. Then $(\omega ^{M_{\alpha _i}}_{\alpha _i},\omega
^{M_{\alpha _i}}_{\alpha _i})=j/2$, $(\omega ^{M_{\alpha
_j}}_{\alpha _j},\omega ^{M_{\alpha _j}}_{\alpha _j})=j$,
$(\alpha _i,\alpha _i)=(\alpha _j,\alpha _j)=2$, $\epsilon
_{\alpha _i}$ is necessarily $1$ and $\epsilon _{\alpha _j}$
may be $1$ or $2$. One deduces that the assumptions are
satisfied.

\item

$\Delta -\Delta^M=\{\alpha _i,\alpha _n\}$, $1\leq i<n$, $M$
is of type $A_{i-1}\times A_{n-i-1}$, $\Sigma
_{red}(T_M)=\{\alpha _i,\alpha _n,\alpha _i+\alpha _n,2\alpha
_i+\alpha _n\}$ is of type $B_2$, $\alpha _n$ is the long
root. In order of $\Sigma _{\mu }$ to be of type $B_2$, $M$
must be self-conjugate in $M_{\alpha _i}$, which means that
$n=2i$. Then  $(\omega ^{M_{\alpha _i}}_{\alpha _i},\omega
^{M_{\alpha _i}}_{\alpha _i})=n/2$, $(\omega ^{M_{\alpha
_n}}_{\alpha _n},\omega ^{M_{\alpha _n}}_{\alpha _n})=n$,
$(\alpha _i,\alpha _i)=2$, $(\alpha _n,\alpha _n)=4$,
$\epsilon _{\alpha _i}$ is necessarily $1$ and $\epsilon
_{\alpha _n}$ may be $1$ or $2$. One deduces that the
assumptions are satisfied.

\end{enumerate}

$\mathbf {D_n}:$

\input{Dn.TpX}

\begin{enumerate}

\item

$\Delta -\Delta^M=\{\alpha _i,\alpha _j\}$, $1\leq i<j\leq
n-2$, $M$ is of type $A_{i-1}\times A_{j-i-1}\times D_{n-j}$,
$\Sigma _{red}(T_M)=\{\alpha _i,\alpha _j,\alpha _i+\alpha
_j,\alpha _i+2\alpha _j\}$ is of type $B_2$, $\alpha _i$ is
the long root, $M_{\alpha _i}$ is of type $A_{j-1}\times
D_{n-j}$, $M_{\alpha _j}$ is of type $A_{i-1}\times D_{n-i}$.
In order of $\Sigma _{\mu }$ to be of type $B_2$, $M$ must be
self-conjugate in $M_{\alpha _i}$, which means that $j=2i$.
Then $(\omega ^{M_{\alpha _i}}_{\alpha _i},\omega ^{M_{\alpha
_i}}_{\alpha _i})=j/2$, $(\omega ^{M_{\alpha _j}}_{\alpha
_j},\omega ^{M_{\alpha _j}}_{\alpha _j})=j$, $(\alpha
_i,\alpha _i)=(\alpha _j,\alpha _j)=2$, $\epsilon _{\alpha
_i}$ is necessarily $1$ and $\epsilon _{\alpha _j}$ may be $1$
or $2$. One deduces that the assumptions are satisfied.

\item

$\Delta -\Delta^M=\{\alpha _i,\alpha _j\}$, $1\leq i<n$,
$j=n-1$ or $j=n$, $M$ is of type $A_{i-1}\times A_{n-i-1}$,
$\Sigma _{red}(T_M)=\{\alpha _i,\alpha _j,\alpha _i+\alpha
_j\}$ is of type $A_2$. Consequently, there are no relevant
cases.

\end{enumerate}

$\mathbf {E_6}:$

Here the only relevant case is

\input{E6.TpX}

\begin{enumerate}

\item

$\Delta - \Delta ^M=\{\alpha _2,\alpha _4\}$, $M$ is of type
$A_2\times A_2$, $\Sigma _{\mu }=\{\alpha _2,\alpha _4,\alpha
_2+\alpha _4,\alpha _2+2\alpha _4,\alpha _2+3\alpha _4,
2\alpha _2+3\alpha _4\}$ is of type $G_2$, $\alpha _2$ is the
long root.  As $M_{\alpha _2}$ and $M_{\alpha _4}$ are both of
$A$-type, $(\omega ^{M_{\alpha _2}}_{\alpha _2},\omega
^{M_{\alpha _2}}_{\alpha _2})=1/2$ and $(\omega ^{M_{\alpha
_4}}_{\alpha _4},\omega ^{M_{\alpha _4}}_{\alpha _4})=3/2$,
$\epsilon _{\alpha _2}$ and $\epsilon _{\alpha _4}1$ are
necessarily $1$. One deduces that the assumptions are
satisfied.

\end{enumerate}

\null $\mathbf E_7:$

Here the relevant cases are:

\input{E7.TpX}

\begin{enumerate}

\item

$\Delta - \Delta ^M=\{\alpha _1,\alpha _3\}$, $M$ is of type
$A_5$, $\Sigma _{\mu }=\{\alpha _1,\alpha _3, \alpha _1+\alpha
_3, \alpha _1+2\alpha _3,\alpha _1+3\alpha _3, 2\alpha
_1+3\alpha _3\}$ is of type $G_2$, $\alpha _1$ is the long
root, $M_{\alpha _1}$ is of type $A_1\times A_5$, $M_{\alpha
_3}$ is of type $D_6$, $(\omega ^{M_{\alpha _1}}_{\alpha
_1},\omega ^{M_{\alpha _1}}_{\alpha _1})=1/2$ and $(\omega
^{M_{\alpha _3}}_{\alpha _3},\omega ^{M_{\alpha _3}}_{\alpha
_3})=3/2$, $\epsilon _{\alpha _1}$ and $\epsilon _{\alpha _3}$
are always $1$. One deduces that the assumptions are
satisfied.

\item

$\Delta - \Delta ^M=\{\alpha _1,\alpha _6\}$, $M$ is of type
$D_4\times A_1$, $\Sigma _{\mu }=\{\alpha _1,\alpha _6, \alpha
_1+\alpha _6, \alpha _1+2\alpha _6\}$ is of type $B_2$,
$\alpha _1$ is the long root, $M_{\alpha _1}$ and $M_{\alpha
_6}$ are both of type $D_5$, $(\omega ^{M_{\alpha _1}}_{\alpha
_1},\omega ^{M_{\alpha _1}}_{\alpha _1})=1$ and $(\omega
^{M_{\alpha _6}}_{\alpha _6},\omega ^{M_{\alpha _6}}_{\alpha
_6})=2$, $\epsilon _{\alpha _1}$ is always $1$ and $\epsilon
_{\alpha _6}$ can be $1$ or $2$. One deduces that the
assumptions are satisfied.

\item

$\Delta - \Delta ^M=\{\alpha _4,\alpha _6\}$, $M$ is of type
$A_2\times A_1\times A_1\times A_1$, $\Sigma _{\mu }=\{\alpha
_4,\alpha _6, \alpha _4+\alpha _6, 2\alpha _4+\alpha _6,
3\alpha _4+\alpha _6, 3\alpha _4+2\alpha _6\}$ is of type
$G_2$, $\alpha _6$ is the long root, $M_{\alpha _4}$ is of
type $D_5$, $M_{\alpha _6}$ is of type $A_2\times A_1\times
A_3$, $(\omega ^{M_{\alpha _4}}_{\alpha _4},\omega ^{M_{\alpha
_4}}_{\alpha _4})=3$ and $(\omega ^{M_{\alpha _6}}_{\alpha
_6},\omega ^{M_{\alpha _6}}_{\alpha _6})=1$, $\epsilon
_{\alpha _6}$ is always $1$ and it follows from lemma 6.1 that
$\epsilon _{\alpha _4}$ is always $1$, too. One deduces that
the assumptions are satisfied.

\end{enumerate}

$\mathbf E_8:$

The relevant cases are:

\input{E8.TpX}

\begin{enumerate}

\item

$\Delta - \Delta ^M=\{\alpha _1,\alpha _5\}$, $M$ is of type
$A_3\times A_3$, $\Sigma _{\mu }=\{\alpha _1,\alpha _1+\alpha
_5, \alpha _1+2\alpha _5, \alpha _1+3\alpha _5\}$ is of type
$B_2$, $\alpha _1+\alpha _5$ is the long root, $M_{\alpha _5}$
is of type $D_7$, $M_{\alpha _1+\alpha _5}$ is of type $A_7$,
$(\omega ^{M_{\alpha _5}}_{\alpha _5},\omega ^{M_{\alpha
_5}}_{\alpha _5})=4$ and $(\omega ^{M_{\alpha _1+\alpha
_5}}_{\alpha _1+\alpha _5},\omega ^{M_{\alpha _1+\alpha
_5}}_{\alpha _1+\alpha _5})=2$, and $\epsilon _{\alpha
_1+\alpha _5}$ is always $1$ and $\epsilon _{\alpha _1}$ can
be $1$ or $2$. One deduces that the assumptions are satisfied.

\item

$\Delta - \Delta ^M=\{\alpha _1,\alpha _6\}$, $M$ is of type
$D_4\times A_2$, $\Sigma _{\mu }=\{\alpha _1,\alpha _6, \alpha
_1+\alpha _6, \alpha _1+2\alpha _6,\alpha _1+3\alpha _6,
2\alpha _1+3\alpha _6\}$ is of type $G_2$, $\alpha _1$ is the
long root, $M_{\alpha _1}$ is of type $D_5\times A_2$,
$M_{\alpha _6}$ is of type $D_7$, $(\omega ^{M_{\alpha
_1}}_{\alpha _1},\omega ^{M_{\alpha _1}}_{\alpha _1})=1$ and
$(\omega ^{M_{\alpha _6}}_{\alpha _6},\omega ^{M_{\alpha
_6}}_{\alpha _6})=3$, $\epsilon _{\alpha _1}$ is always $1$
and it follows from lemma 6.1 that $\epsilon _{\alpha _6}$ is
always $1$, too. One deduces that the assumptions are
satisfied.

\item

$\Delta - \Delta ^M=\{\alpha _1,\alpha _8\}$, $M$ is of type
$D_6$, $\Sigma _{\mu }=\{\alpha _1,\alpha _8, \alpha _1+\alpha
_8, 2\alpha _1+\alpha _8\}$ is of type $B_2$, $\alpha _8$ is
the long root, $M_{\alpha _1}$ is of type $E_7$, $M_{\alpha
_8}$ is of type $D_7$, $(\omega ^{M_{\alpha _1}}_{\alpha
_1},\omega ^{M_{\alpha _1}}_{\alpha _1})=2$ and $(\omega
^{M_{\alpha _8}}_{\alpha _8},\omega ^{M_{\alpha _8}}_{\alpha
_8})=1$, and $\epsilon _{\alpha _8}$ is always $1$ and
$\epsilon _{\alpha _1}$ can be $1$ or $2$. One deduces that
the assumptions are satisfied.

\item

$\Delta - \Delta ^M=\{\alpha _2,\alpha _5\}$, $M$ is of type
$A_3\times A_3$, $\Sigma _{\mu }=\{\alpha _2,\alpha _2+\alpha
_5, \alpha _2+2\alpha _5, 2\alpha _2+3\alpha _5\}$ is of type
$B_2$, $\alpha _5$ is the long root, $M_{\alpha _5}$ is of
type $A_7$, $M_{\alpha _2+\alpha _5}$ is of type $D_7$,
$(\omega ^{M_{\alpha _5}}_{\alpha _5},\omega ^{M_{\alpha
_5}}_{\alpha _5})=2$ and $(\omega ^{M_{\alpha _2+\alpha
_5}}_{\alpha _2+\alpha _5},\omega ^{M_{\alpha _2+\alpha
_5}}_{\alpha _2+\alpha _5})=4$, and $\epsilon _{\alpha _5}$ is
always $1$ and $\epsilon _{\alpha _2+\alpha _5}$ can be $1$ or
$2$. One deduces that the assumptions are satisfied.

\item

$\Delta - \Delta ^M=\{\alpha _4,\alpha _6\}$, $M$ is of type
$A_2\times A_1\times A_1\times A_2$, $\Sigma _{\mu }=\{\alpha
_4,\alpha _4+\alpha _6, 2\alpha _4+\alpha _6, 3\alpha
_4+2\alpha _6\}$ is of type $B_2$, $\alpha _4$ is the long
root, $M_{\alpha _4}$ is of type $D_5$, $M_{\alpha _4+\alpha
_6}$ is of type $E_6$, $(\omega ^{M_{\alpha _4}}_{\alpha
_4},\omega ^{M_{\alpha _4}}_{\alpha _4})=3$ and $(\omega
^{M_{\alpha _4+\alpha _6}}_{\alpha _4+\alpha _6},\omega
^{M_{\alpha _4+\alpha _6}}_{\alpha _4+\alpha _6})=6$,
$\epsilon _{\alpha _4+\alpha _6}$ can be $1$ or $2$, and it
follows from lemma 6.1 that $\epsilon _{\alpha _4}$ is always
$1$. One deduces that the assumptions are satisfied.

\item

$\Delta - \Delta ^M=\{\alpha _4,\alpha _7\}$, $M$ is of type
$A_2\times A_1\times A_2\times A_1$, $\Sigma _{\mu }=\{\alpha
_4,\alpha _4+\alpha _7, 2\alpha _4+\alpha _7, 3\alpha
_4+\alpha _7\}$ is of type $B_2$, $\alpha _4+\alpha _7$ is the
long root, $M_{\alpha _4}$ is of type $E_6$, $M_{\alpha
_4+\alpha _7}$ is of type $D_5\times A_2$, $(\omega
^{M_{\alpha _4}}_{\alpha _4},\omega ^{M_{\alpha _4}}_{\alpha
_4})=6$ and $(\omega ^{M_{\alpha _4+\alpha _7}}_{\alpha
_4+\alpha _7},\omega ^{M_{\alpha _4+\alpha _7}}_{\alpha
_4+\alpha _7})=3$, $\epsilon _{\alpha _4}$ can be $1$ or $2$
and it follows from lemma 6.1 that $\epsilon _{\alpha
_4+\alpha _7}$ is always $1$. One deduces that the assumptions
are satisfied.

\item

$\Delta - \Delta ^M=\{\alpha _7,\alpha _8\}$, $M$ is of type
$E_6$, $\Sigma _{\mu }=\{\alpha _7,\alpha _8, \alpha _7+\alpha
_8, 2\alpha _7+\alpha _8,3\alpha _7+\alpha _8, 3\alpha
_7+2\alpha _8\}$ is of type $G_2$, $\alpha _8$ is the long
root, $M_{\alpha _7}$ is of type $E_7$, $M_{\alpha _8}$ is of
type $E_6\times A_1$, $(\omega ^{M_{\alpha _7}}_{\alpha
_7},\omega ^{M_{\alpha _7}}_{\alpha _7})=3/2$ and $(\omega
^{M_{\alpha _8}}_{\alpha _8},\omega ^{M_{\alpha _8}}_{\alpha
_8})=1/2$, $\epsilon _{\alpha _8}$ is always $1$ and $\epsilon
_{\alpha _7}$ may be $1$ or $2$. One deduces that the
assumptions are satisfied.
\end{enumerate}

$\mathbf F_4:$

The relevant cases are

\input{F4.TpX}

\begin{enumerate}

\item

$\Delta ^M=\Delta -\{\alpha _1,\alpha _2\}$, $M$ is of type
$A_2$,  $\Sigma _{\mu }=\{\alpha _1,\alpha _2, \alpha
_1+\alpha _2, \alpha _1+2\alpha _2,\alpha _1+3\alpha _2,
2\alpha _1+3\alpha _2\}$ is of type $G_2$, $\alpha _1$ is the
long root, $M_{\alpha _1}$ is of type $A_1\times A_2$,
$M_{\alpha _2}$ is of type $C_3$, $(\omega ^{M_{\alpha
_1}}_{\alpha _1},\omega ^{M_{\alpha _1}}_{\alpha _1})=1/2$ and
$(\omega ^{M_{\alpha _2}}_{\alpha _2},\omega ^{M_{\alpha
_2}}_{\alpha _2})=3/2$, $\alpha _1$ and $\alpha _2$ have both
the same length, $\epsilon _{\alpha _1}$ is always $1$ and it
follows from lemma 6.1 that $\epsilon _{\alpha _2}$ is always
$1$, too. One deduces that the assumptions are satisfied.
\item

$\Delta ^M=\Delta -\{\alpha _1,\alpha _4\}$, $M$ is of type
$B_2$, $\Sigma _{\mu }=\{\alpha _1,\alpha _4, \alpha _1+\alpha
_4, \alpha _1+2\alpha _4\}$ is of type $B_2$, $\alpha _1$ is
the long root, $M_{\alpha _1}$ is of type $B_3$, $M_{\alpha
_4}$ is of type $C_3$, $(\omega ^{M_{\alpha _1}}_{\alpha
_1},\omega ^{M_{\alpha _1}}_{\alpha _1})=1$ and $(\omega
^{M_{\alpha _4}}_{\alpha _4},\omega ^{M_{\alpha _4}}_{\alpha
_4})=1/2$, $(\alpha _1,\alpha _1)=2$, $(\alpha _4,\alpha
_4)=1$, $\epsilon _{\alpha _4}$ is always $1$ and $\epsilon
_{\alpha _1}$ may be $1$ or $2$. One deduces that the
assumptions are satisfied.

\item

$\Delta ^M=\Delta -\{\alpha _3,\alpha _4\}$, $M$ is of type
$A_2$, $\Sigma _{\mu }=\{\alpha _3,\alpha _4, \alpha _3+\alpha
_4, 2\alpha _3+\alpha _4,3\alpha _3+\alpha _4, 3\alpha
_3+2\alpha _4\}$ is of type $G_2$, $\alpha _4$ is the long
root, $M_{\alpha _3}$ is of type $B_3$, $M_{\alpha _4}$ is of
type $A_2\times A_1$, $(\omega ^{M_{\alpha _3}}_{\alpha
_3},\omega ^{M_{\alpha _3}}_{\alpha _3})=3/4$ and $(\omega
^{M_{\alpha _4}}_{\alpha _4},\omega ^{M_{\alpha _4}}_{\alpha
_4})=1/4$, $\alpha _3$ and $\alpha _4$ have both the same
lenght, $\epsilon _{\alpha _3}$ and $\epsilon _{\alpha _4}$
are always $1$. One deduces that the assumptions are
satisfied.

\end{enumerate}

\subsection{The non-split quasi-split cases:}
Here the absolute root system differs from the $F$-root system.
The question of self-conjugacy can be dealt with the $F$-root
system. For the formula which relates $\epsilon _{\beta }$ and
$\epsilon _{\overline{\beta }}$, one has now to use $\ti{\beta }$,
which is a multiple of $\omega _{\beta }^{M_{\beta }}$ by a
nonzero scalar. This scalar is determined by the relation between
the restrictions of $\beta ^{\vee }$ and $\underline{\beta }^{\vee
}$. Remark that all the absolute root systems below are simply
laced, so that the absolute roots have in each case the same
length. We will also use the fact that the $\epsilon _{\alpha }$
are invariant by restriction of scalars.

\null $\mathbf \ ^2A_{2n-1}:$

\input{2Anodd.TpX}

This absolute root system corresponds to quasi-split groups which
split over a quadratic extension $F'$ of $F$. The $F$-root system
is of type $C_n$. Hence we have the same relevant cases as
discussed in the split $C_n$ case. We will denote by $\ti{A}_i$
the type of a quasi-split group which is the restriction of
scalars with respect to $F'/F$ of a split group of type $A_i$.

\begin{enumerate}

\item

$\Delta -\Delta^M=\{\alpha _i,\alpha _j\}$, $1\leq i<j\leq
n-1$, $M$ is of type $\tilde{A}_{i-1} \times \tilde{A}_{j-i-1}
\times\ ^2A_{2(n-j)-1}$, $\Sigma _{red}(T_M)=\{\alpha
_i,\alpha _n,\alpha _i+\alpha _n,\alpha _i+2\alpha _j\}$ is of
type $B_2$, $\alpha _i$ is the long root. If $\Sigma _{\mu }$
is properly contained in $\Sigma (T_M)$, $M_{\alpha _i}$ is of
type $\tilde{A}_{j-1}\times\ ^2A_{2(n-j)-1}$ and $M_{\alpha
_j}$ of type $\tilde{A}_{i-1}\times {}^2A_{2(n-i)-1}$. In
order of $\Sigma _{\mu }$ to be of type $B_2$, $M$ must be
self-conjugate in $M_{\alpha _i}$, which means $j=2i$. Then
$(\ti{\alpha _i},\ti{\alpha _i})=j/2$ and $(\ti{\alpha _j},
\ti{\alpha _j})=j$. As the $\epsilon_{\beta}$ are invariant
for restriction of scalars, we have always
$\epsilon_{\alpha_i}=1$, $\epsilon _{\alpha _j}$ may be $1$ or
$2$. One deduces that our assumptions are satisfied.

\item

$\Delta -\Delta^M=\{\alpha _i,\alpha _n\}$, $1\leq i<n$, $M$
is of type $\tilde{A}_{i-1}\times \tilde{A}_{n-i-1}$, $\Sigma
_{red}(T_M)=\{\alpha _i,\alpha _n,\alpha _i+\alpha _n,2\alpha
_i+\alpha _n\}$ is of type $B_2$, $\alpha _n$ is the long
root. In order of $\Sigma _{\mu }$ to be of type $B_2$, $M$
must be self-conjugate in $M_{\alpha _i}$, which means $n=2i$.
Then $(\ti{\alpha _i},\ti{\alpha _i})=n/2$, $(\ti{\alpha
_n},\ti{\alpha _n})=n/4$, $\epsilon _{\alpha _i}$ is always
$1$ (as in the previous case) and $\epsilon _{\alpha _n}=1$ by
\cite[diagram ${}^2A_{2k-1}-2$]{Sh2}. One deduces that the
assumptions are satisfied.
\end{enumerate}

{\bf ${}^2A_{2n}$:}

\input{2Aneven.TpX}

This absolute root system correponds to $F$-groups which split
over a quadratic extension $F'/F$. The reduced $F$-roots system is
of type $B_n$. Hence we have the same relevant cases as discussed
in the split $B_n$ case.

\begin{enumerate}

\item

$\Delta -\Delta^M=\{\alpha _i,\alpha _j\}$, $1\leq i<j\leq
n-1$, $M$ is of type $\tilde{A}_{i-1}\times
\tilde{A}_{j-i-1}\times {}^2A_{2(n-j)}$, $\Sigma
_{red}(T_M)=\{\alpha _i,\alpha _j,\alpha _i+\alpha _j,\alpha
_i+2\alpha _j\}$ is of type $B_2$, $\alpha _i$ is the long
root, the relevant first factor of $M_{\alpha _i}$ is of type
$\tilde{A}_{j-1}$, the relevant second factor of $M_{\alpha
_j}$ is of type ${}^2A_{2(n-i)}$. In order of $\Sigma _{\mu }$
to be of type $B_2$, $M$ must be self-conjugate in $M_{\alpha
_i}$, which means $j=2i$. Then $(\ti{\alpha _i},\ti{\alpha
_i})=j/2$, $(\ti{\alpha _j},\ti{\alpha _j})=j$,
$\epsilon_{\alpha_i}=1$ as above and, by \cite[diagram
${}^2A_{2k-1}-1,4$]{Sh2}, $\epsilon _{\alpha _j}$ may be $1$
or $2$. One deduces that our assumptions are satisfied.

\item

$\Delta -\Delta^M=\{\alpha _i,\alpha _n\}$, $1\leq i<n$, $M$
is of type $\tilde{A}_{i-1}\times \tilde{A}_{n-i-1}$, $\Sigma
_{red}(T_M)=\{\alpha _i,\alpha _n,\alpha _i+\alpha _n,\alpha
_i+2\alpha _n\}$ is of type $B_2$, $\alpha _i$ is the long
root. In order of $\Sigma _{\mu }$ to be of type $B_2$, $M$
must be self-conjugate in $M_{\alpha _i}$, which means $n=2i$.
Then $(\ti{\alpha _i},\ti{\alpha _i})=n/2$, $(\ti{\alpha
_n},\ti{\alpha _n})=n$, $\epsilon _{\alpha _i}=1$ (as in the
previous case) and $\epsilon _{\alpha _n}$ can be $1$ or $2$
by \cite[diagram ${}^2A_{2k-1}-3$]{Sh2}. One deduces that our
assumptions are satisfied.

\end{enumerate}

{\bf ${}^2D_{n+1}$:}

\input{2Dn.TpX}

This absolute root system correponds to $F$-groups which split
over a quadratic extension $F'/F$. The reduced $F$-roots system is
of type $B_n$. Hence we have the same relevant cases as discussed
in the split $B_n$ case.

\begin{enumerate}
\item

$\Delta -\Delta^M=\{\alpha _i,\alpha _j\}$, $1\leq i<j\leq-1$,
$M$ is of type $A_{i-1}\times A_{j-i-1}\times {}^2D_{n-j+1}$,
$\Sigma _{red}(T_M)=\{\alpha _i,\alpha _j,\alpha _i+\alpha
_j,\alpha _i+2\alpha _j\}$ is of type $B_2$, $\alpha _i$ is
the long root, the relevant first factor of $M_{\alpha _i}$ is
of type $A_{j-1}$, the relevant second factor of $M_{\alpha
_j}$ is of type ${}^2D_{n-i+1}$. In order of $\Sigma _{\mu }$
to be of type $B_2$, $M$ must be self-conjugate in $M_{\alpha
_i}$, which means $j=2i$. Then $(\ti{\alpha _i},\ti{\alpha
_i})=j/2$, $(\ti{\alpha _j},\ti{\alpha _j})=j$, clearly
$\epsilon_{\alpha_i}=1$ and, by \cite[diagram
${}^2D_{n}-1,2$]{Sh2}, $\epsilon _{\alpha _j}$ may be $1$ or
$2$. One deduces that our assumptions are satisfied.

\item

$\Delta -\Delta^M=\{\alpha _i,\alpha _n\}$, $1\leq i<n$, $M$
is of type $A_{i-1}\times A_{n-i-1}$, $\Sigma
_{red}(T_M)=\{\alpha _i,\alpha _n,\alpha _i+\alpha _n,\alpha
_i+2\alpha _n\}$ is of type $B_2$, $\alpha _i$ is the long
root. In order of $\Sigma _{\mu }$ to be of type $B_2$, $M$
must be self-conjugate in $M_{\alpha _i}$, which means $n=2i$.
Then $(\ti{\alpha _i},\ti{\alpha _i})=n/2$, $(\ti{\alpha
_n},\ti{\alpha _n})=n$, $\epsilon _{\alpha _i}=1$ (as in the
previous case) and $\epsilon _{\alpha _n}$ may be $1$ or $2$
by \cite[diagram ${}^2D_{n}-3$]{Sh2}.  One deduces that our
assumptions are satisfied.

\end{enumerate}

\null {\bf ${}^3D_{4}$ and ${}^6D_4$:}

\input{3D4.TpX}

These are the two quasi-split triality $D_4$ groups. The group
$^3D_4$ splits over a (cyclic) extension of degree 3 and the group
$^6D_4$ over a Galois extension of degree 6 with Galois group
$S_3$. So, in both cases the absolute root system is the same,
only the action of the Galois group differs. The $F$-root system
is in both cases of type $G_2$, which is already of rank 2. So the
only relevant case is, when $\Sigma _{\mu }$ equals the $F$-root
system. Denote by $\alpha _1$ the short root and by $\alpha _2$
the long root. As $M_{\alpha _1}$ is of type $A_1$, one has always
$\epsilon _{\alpha _1}=1$. The group $M_{\alpha _2}$ is of type
$\ti{A}_1$, which means that the root system of its $L$-group is
the union of three root systems of type $A_1$ with a transitive
action of the Galois group. One deduces that $\epsilon _{\alpha
_2}$ is always $1$, too. As $(\ti{\alpha _1},\ti{\alpha _1})=2$
and $(\ti{\alpha _2},\ti{\alpha _2})=2/3$, our assumptions are
satisfied.

\null {\bf ${}^2E_{6}$:}

\input{2E6.TpX}

The two quasi-split cases of ${}^2E_6$ type (one has an unramified
quadratic extension as ``splitting field'', the other a ramified
extension of degree $2$) give rise to a relative Dynkin diagram of
type $F_4$ (which dictates the analysis of the relevant cases). In
these cases the analysis is exactly the same. We denote by
$F^\prime$ the splitting field (a quadratic extension of $F$).

\begin{enumerate}

\item

$\Delta ^M=\Delta -\{\alpha _1,\alpha _2\}$, $M$ is of type
$\tilde{A}_2$,  $\Sigma _{\mu }=\{\alpha _1,\alpha _2, \alpha
_1+\alpha _2, \alpha _1+2\alpha _2,\alpha _1+3\alpha _2,
2\alpha _1+3\alpha _2\}$ is of type $G_2$, $\alpha _1$ is the
long root, $M_{\alpha _1}$ is of type $A_1\times \tilde{A}_2$,
$M_{\alpha _2}$ is of type ${}^2A_5$, $(\ti{\alpha _1},
\ti{\alpha _1})=1/2$ and $(\ti{\alpha _2},\ti{\alpha
_2})=3/2$, $\epsilon _{\alpha _1}$ is always $1$ and $\epsilon
_{\alpha _2}$ may be $1$ or $2$ (by \cite[diagram
$^2E6-1$]{Sh2}), and it follows from lemma 6.1 that $\epsilon
_{\alpha _2}$ is always $1$. \rm

\item

$\Delta ^M=\Delta -\{\alpha _1,\alpha _4\}$, $M$ is of type
${}^2A_3$, $\Sigma _{\mu }=\{\alpha _1,\alpha _4, \alpha
_1+\alpha _4, \alpha _1+2\alpha _4\}$ is of type $B_2$,
$\alpha _1$ is the long root, $M_{\alpha _1}$ is of type
${}^2D_4$, $M_{\alpha_4}$ is of type ${}^2A_5$, $(\ti{\alpha
_1},\ti{\alpha _1})=1$ and $(\ti{\alpha _4},\ti{\alpha
_4})=2$, $\epsilon _{\alpha _1}$ is always $1$, and $\epsilon
_{\alpha _4}$ may be $1$ or $2$ by \cite[Diagram
${}^2A_5-1$]{Sh2}. One deduces that the assumptions are
satisfied.

\item

$\Delta ^M=\Delta -\{\alpha _3,\alpha _4\}$, $M$ is of type
$A_2$, $\Sigma _{\mu }=\{\alpha _3,\alpha _4, \alpha _3+\alpha
_4, 2\alpha _3+\alpha _4,3\alpha _3+\alpha _4, 3\alpha
_3+2\alpha _4\}$ is of type $G_2$, $\alpha _4$ is the long
root, $M_{\alpha _3}$ is of type ${}^2D_4$, $M_{\alpha _4}$ is
of type $A_2\times \tilde{A}_1$, $(\ti{\alpha _3},\ti{\alpha
_3})=3$ and $(\ti{\alpha _4},\ti{\alpha _4})=1$, $\epsilon
_{\alpha _4}$ is always $1$ and $\epsilon _{\alpha _3}$ may be
$1$ or $2$ (for the first, use \cite[Diagram ${}^2D_4$]{Sh2},
and it follows from lemma 6.1 that $\epsilon _{\alpha _4}$ is
always $1$. \rm
\end{enumerate}

\end{document}